\def\squarebox#1{\hbox to #1{\hfill\vbox to #1{\vfill}}}
\theoremstyle{plain}
\newtheorem{Thm}{Theorem}
\newtheorem{lem}{Lemma}
\newcommand{\bel}{\begin{equation} \label}
\newcommand{\ee}{\end{equation}}
\newcommand{\re}{\mathfrak R}
\newcommand{\im}{\mathfrak I}
\newcommand{\R}{\mathbb{R}}
\newcommand{\C}{{\mathbb C}}
\def\epsilon{\varepsilon}
\def\phi {\varphi}
\newtheorem{rem}{Remark}
\newtheorem{prop}{Proposition}
\newtheorem{defi}{Definition}
\providecommand{\abs}[1]{\left\lvert#1\right\rvert}
\providecommand{\norm}[1]{\left\lVert#1\right\rVert}
\numberwithin{equation}{section}
\renewcommand{\leq}{\leqslant}
\renewcommand{\geq}{\geqslant}
\providecommand{\abs}[1]{\left\lvert#1\right\rvert}
\providecommand{\norm}[1]{\left\lVert#1\right\rVert}
\def\beq{\begin{equation}}
\def\eeq{\end{equation}}
\newcommand{\bea}{\begin{eqnarray}}
\newcommand{\eea}{\end{eqnarray}}
\newcommand{\beas}{\begin{eqnarray*}}
\newcommand{\eeas}{\end{eqnarray*}}
\begin{document}

\title[Determination of a time-dependent coefficient for  wave equations]{Stability in the determination of a time-dependent coefficient for  wave equations from partial   data}

\author[Yavar Kian]{Yavar Kian}
\maketitle
\begin{center} \footnotesize{ CPT, UMR CNRS 7332,\\ Aix Marseille Universit\'e,\\ 13288 Marseille, France,\\ and Universit\'e de Toulon,\\ 83957 La Garde, France\\ yavar.kian@univ-amu.fr}\end{center}

\begin{abstract}
We consider the stability in the inverse problem consisting of the determination of  a time-dependent coefficient of order zero $q$, appearing in a Dirichlet initial-boundary value problem for  a wave equation $\partial_t^2u-\Delta u+q(t,x)u=0$ in $Q=(0,T)\times\Omega$ with $\Omega$  a  $C^2$ bounded domain of $\R^n$, $n\geq2$, from  partial observations on $\partial Q$. The observation is given by a boundary operator associated to the wave equation. Using suitable complex geometric optics solutions and  Carleman estimates, we prove a stability estimate in the determination of $q$ from the  boundary operator. \\

\medskip
\noindent
{\bf  Keywords:} Inverse problems, wave equation, scalar time-dependent potential, Carleman estimates, stability
inequality.\\

\medskip
\noindent
{\bf Mathematics subject classification 2010 :} 35R30, 	35L05.
\end{abstract}

\section{Introduction}
\label{sec-intro}
\setcounter{equation}{0}
\subsection{Statement of the problem }
In the present paper we consider   a $\mathcal C^2$ bounded domain $\Omega$ of $\R^n$, $n\geq2$.
We set  $\Sigma=(0,T)\times\partial\Omega$ and $Q=(0,T)\times\Omega$ with $0<T<\infty$.  We introduce the  following initial-boundary
value problem (IBVP in short) for the wave equation
\begin{equation}\label{eq1}\left\{\begin{array}{ll}\partial_t^2u-\Delta u+q(t,x)u=0,\quad &\textrm{in}\ Q,\\  u(0,\cdot)=v_0,\quad \partial_tu(0,\cdot)=v_1,\quad &\textrm{in}\ \Omega,\\ u=g,\quad &\textrm{on}\ \Sigma,\end{array}\right.\end{equation}
where the  potential $q\in L^\infty(Q)$ is assumed to be real valued.  We study the inverse problem of determining $q$ from observations  of the solutions of \eqref{eq1} on $\partial Q$. Let us introduce the hyperbolic Dirichlet-Neumann (DN in short) map associated to \eqref{eq1}, with $v_0=v_1=0$, defined by
$\Lambda_q:g\mapsto\partial_\nu u$ with $u$ the solution of  \eqref{eq1}, with $v_0=v_1=0$, and $\nu$  the outward unit normal vector to $\Omega$.
It is well known that the DN map determines uniquely a time-independent potential $q$ (e.g. \cite{RS1}).   In contrast to time-independent potentials, due to domain of dependence arguments, there is no hope to recover  general time dependent potential $q$ (see \cite[Subsection 1.1]{Ki2}) from the DN map $\Lambda_q$ on the set
\[D=\{(t,x)\in Q:\  0<t<\textrm{Diam}(\Omega)/2,\  \textrm{dist}(x,\partial\Omega)< t\}.\]
Facing this obstruction to uniqueness it seems that the minimal data   that allows to recover  globally general time-dependent potential $q$ so far (at finite time) is given by \cite[Theorem 1]{Ki2} where uniqueness is stated. The main goal of the present paper is to prove stability in the recovery of some general time-dependent potentials $q$  from similar data. 

Practically, our inverse problem is to determine physical properties such as the time evolving density of an inhomogeneous medium by probing it with disturbances generated on the boundary and at initial time. The data is the response of the medium to these disturbances, measured on the boundary and at the end of the experiment, and the purpose is to recover the function which measures the property of the medium. Note that time-dependent potential can also be considered for models where the evolution in time of the perturbation can not be avoided.

We also remark that, according to \cite{I2}, the determination of time-dependent potentials can  be an important tool for the  determination of a semilinear term appearing in a semilinear hyperbolic equation from boundary measurements.
\subsection{Existing papers }

In recent years the problem of recovering  coefficients for hyperbolic equations  from boundary measurements  has attracted many attention.  Many authors have considered this problem with an observation given by the DN map $\Lambda_q$.  In \cite{RS1}, the authors proved that the DN map determines uniquely the time-independent  potential in a wave equation and in   \cite{I1} Isakov considered the determination of a coefficient of order zero and a damping coefficient. These results are concerned with measurements on  the whole boundary. The uniqueness by local DN map has been considered in \cite{E1}. The stability estimate in the case where the DN map is considered on the whole lateral boundary was treated by Stefanov and Uhlmann \cite{SU,SU2}. The uniqueness and H\"older stability estimate in a subdomain were established by Isakov and Sun \cite{IS} and, assuming that the coefficients are known in a neighborhood of the boundary, Bellassoued,  Choulli and  Yamamoto \cite{BCY} proved a log-type stability estimate in the case where the Neumann data are observed in an arbitrary subdomain of the boundary. We mention also  \cite{Mo}, where the stability issue have been considered for large class of coefficients. In a recent work \cite{Ki} extended the results of \cite{RS1} to determine a large class of time-independent coefficients of order zero in an unbounded cylindrical domain. It has been proved that only measurements on a bounded subset are required for the determination of some class of coefficients including periodic coefficients and compactly supported coefficients.

Let us also mention that the method using Carleman inequalities was first considered  by Bukhgeim and Klibanov \cite{BK}. For the application of Carleman estimates to the problem of recovering time-independent coefficients for hyperbolic equations we refer to \cite{B}, \cite{IY} and \cite{K}. 

All the above mentioned results are concerned only with time-independent coefficients. Several authors considered the problem of determining time-dependent coefficients for hyperbolic equations. In \cite{St}, Stefanov proved unique determination of a time-dependent potential for the wave equation  from the knowledge of scattering data. The result of \cite{St} is equivalent to the consideration of the problem with boundary measurements.   In \cite{RS}, Ramm and  Sj\"ostrand considered the problem of determining the time-dependent coefficient $q$ from DN map $\Lambda_q$ associated to \eqref{eq1}. For this purpose, they  considered the   problem on the infinite time-space cylindrical domain $\R_t\times\Omega$  instead of $Q$ ($t\in\R$ instead of $0<t<T<\infty$) and their DN map was associated to solutions vanishing for large negative time. Then, under suitable additional assumptions, \cite{RS} proved a result of uniqueness. The result of \cite{RS} has been extended to more general coefficient by \cite{S} where stability estimate is also stated  for compactly supported coefficients provided $T$ is sufficiently large.  In  \cite{RR}, Rakesh and  Ramm considered the same problem at finite time on $Q$, with $T>\textrm{Diam} (\Omega)$, and they proved a  uniqueness result for the determination of $q$ restricted to the subset $S$ of $Q$, 
made of lines  with angle $45^\circ$    with the $t$-axis and which meet the planes $t = 0$ and $t = T$ outside
$\overline{Q}$, from  the DN map $\Lambda_q$. In Theorem 4.2 of \cite{I}, Isakov established a result of uniqueness for a time-dependent potential on the whole domain $Q$ from observations of the solution on $\partial Q$. Applying  a result of unique continuation borrowed from \cite{T}, Eskin \cite{E2} proved that the DN map uniquely determines time-dependent coefficients that are analytic with respect to the time variable $t$. In some recent work,  \cite{W} proved stability in the recovery of  X-ray transforms of time-dependent potentials on a Riemannian manifold. We also mention that \cite{A}, proved log-type stability in the recovery of time-dependent potentials from the data considered by \cite{RR} and \cite{I}. Finally in \cite{Ki2},  the author proved determination of general time dependent potential from, roughly speaking, half of the data considered by \cite{I}.
	
We also mention that \cite{Ch}, \cite{CK}, \cite{CKS} and \cite{GK} consider the problem of determining a time-dependent coefficient for parabolic and Schr\"odinger equations and  derive stability estimate for these problems.

\subsection{Main result}
In order to state our main result, we  first introduce some intermediate tools and notations. For all $\omega\in\mathbb S^{n-1}=\{y\in\R^n:\ \abs{y}=1\}$ we introduce the $\omega$-illuminated face
\[\partial\Omega_{-,\omega}=\{x\in\partial\Omega:\ \nu(x)\cdot\omega\leq0\}\]
and the $\omega$-shadowed face 
\[\partial\Omega_{+,\omega}=\{x\in\partial\Omega:\ \nu(x)\cdot\omega\geq0\}\]
of $\partial\Omega$. We associate to $\partial\Omega_{\pm,\omega}$ the part of the lateral boundary $\Sigma$ given by $\Sigma_{\pm,\omega}=(0,T)\times\partial\Omega_{\pm,\omega}$. From now on we fix $\omega_0\in \mathbb S^{n-1}$ and we consider $F=(0,T)\times F'$ (resp $G=(0,T)\times G'$) with $F'$ (resp $G'$) a closed  neighborhood of $\partial\Omega_{+,\omega_0}$ (resp $\partial\Omega_{-,\omega_0}$) in $\partial\Omega$.

From now on we denote by $\Box$  the differential operator $\partial_t^2-\Delta_x$. According to  \cite[Proposition 4]{Ki2}, we can extend the trace maps
\[\tau_{0,1}v=v_{|\Sigma},\ \tau_{0,2}v=v_{|t=0},\ \tau_{0,3}v=\partial_tv_{|t=0},\quad v\in\mathcal C^\infty(\overline{Q})\]
on $H_{\Box}(Q)=\{u\in L^2(Q):\ \Box u\in L^2(Q)\}$. Then we define 
\[\mathcal H_{F}(\partial Q):=\{(\tau_{0,1}u,\tau_{0,3}u):\ u\in H_{\Box}(Q),\ \tau_{0,2}u=0,\ \textrm{supp}\tau_{0,1}u\subset F\}.\]
We refer to \cite[Section 2]{Ki2} (see also Section 2) for more details about these spaces and the definition of $\norm{.}_{\mathcal H_F(\partial Q)}$.
In view \cite[Section 2]{Ki2}, we can  associate to \eqref{eq1} with $v_0=0$ the boundary operator
\bel{a1}
B_q:  \mathcal H_{F}(\partial Q) \ni (g,v_1)  \mapsto (\partial_\nu u_{|G},u_{|t=T})
\ee
where $u$ solves \eqref{eq1} with $v_0=0$. We refer to \cite[Proposition 2]{Ki2} (see also Section 2) for a more rigorous definition of this operator.
In Section 2, we prove that for every $q_1,\ q_2 \in L^{\infty}(Q)$  the operator 
\[B_{q_1}-B_{q_2}:\mathcal H_{F}(\partial Q)\rightarrow L^2(G)\times H^1(\Omega)\]
is bounded. Then our main result can be  stated as follows.

\begin{Thm}\label{thm1} 
Let $p>n+1$ and $q_1,\ q_2 \in W^{1,p}(Q)$ . Assume that the conditions 
\begin{equation}\label{thmma}q_1(t,x)=q_2(t,x),\quad (t,x)\in\Sigma\end{equation}
\[\norm{q_1}_{W^{1,p}(Q)}+\norm{q_2}_{W^{1,p}(Q)}\leq M\]
are fulfilled.
Then,  we have
\begin{equation}\label{thm1a} \norm{q_1-q_2}_{L^2(Q)}\leq h\left(\norm{B_{{q}_1}-B_{{q}_2}}\right)\end{equation}
with 
\[h(\gamma)=\left\{\begin{array}{l}C\frac{\gamma}{\gamma^*},\quad \gamma\geq\gamma^*,\\
C\ln(\abs{\ln \gamma})^{-\frac{1}{2}},\quad 0<\gamma<\gamma^*,\\ 0,\quad \gamma=0.\end{array}\right.\]
Here $\norm{B_{{q}_1}-B_{{q}_2}}$ stands for the norm of $B_{{q}_1}-B_{{q}_2}$ as an element of $\mathcal B(\mathcal H_{F}(\partial Q);L^2(G)\times H^1(\Omega))$. Moreover, $C$ is a positive constant depending on $n$, $p$,  $M$, $\Omega$, $T$, $F'$, $G'$ and $\gamma_*=e^{-e^{A{R}_2}}$, with $A$ and $R_2$ two constants introduced in Section 5 which depend  on  $n$, $p$, $M$, $T$, $\Omega$, $F'$, $G'$.\end{Thm}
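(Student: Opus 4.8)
The plan is to combine a global Carleman estimate for the operator $\Box=\partial_t^2-\Delta_x$ with a family of geometric optics (GO) solutions and an Alessandrini-type orthogonality identity, and then to conclude through a frequency-splitting optimization. Throughout, set $q=q_1-q_2$. By \eqref{thmma} and the embedding $W^{1,p}(Q)\hookrightarrow\mathcal C(\overline Q)$ (valid since $p>n+1$), $q$ vanishes on $\Sigma$, so its extension by zero lies in $H^1(\R^{1+n})$ with norm controlled by $M$; this regularity is what will bound the high-frequency tail of $q$.

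First I would establish the orthogonality identity. Given a solution $u_2$ of $\Box u_2+q_2u_2=0$ with $u_2(0,\cdot)=0$, let $u$ solve $\Box u+q_1u=0$ with the same Cauchy data and the same Dirichlet data on $\Sigma$ as $u_2$; then $w=u-u_2$ solves $\Box w+q_1w=q\,u_2$ with vanishing data at $t=0$ and on $\Sigma$. Multiplying by a second solution $\tilde u_1$ of $\Box\tilde u_1+q_1\tilde u_1=0$ and integrating by parts over $Q$ yields
\[
\int_Q q\,u_2\,\tilde u_1\,\d x\,\d t=\big\langle (B_{q_1}-B_{q_2})(g,v_1),\,\Phi(\tilde u_1)\big\rangle+\mathcal R,
\]
where the first term pairs the observed data — the Neumann trace $\partial_\nu w$ on $G$ and the final value $w|_{t=T}$ — against the corresponding traces $\Phi(\tilde u_1)$ of $\tilde u_1$, while $\mathcal R$ collects the unobserved contributions on $\Sigma\setminus G$, on $\Sigma\setminus F$ and at $t=T$.

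Next I would build the GO solutions from the null linear phase $\psi(t,x)=t-x\cdot\omega_0$, for which the principal symbol of $\Box$ vanishes. I take $u_2=e^{i\rho\psi}e^{-\frac i2\,\zeta\cdot(t,x)}(a_0+\rho^{-1}a_1)+r_\rho$ and $\tilde u_1=e^{-i\rho\psi}e^{-\frac i2\,\zeta\cdot(t,x)}(b_0+\cdots)+\tilde r_\rho$, where $\zeta=(\sigma,\xi)\in\R^{1+n}$ is a fixed covector, the amplitudes $a_0,b_0$ solve the transport equations along the null lines $s\mapsto(s,x_0+s\omega_0)$, are cut off to be supported on $\Sigma$ inside the neighborhoods $F'$ and $G'$, and satisfy $a_0(0,\cdot)=0$ to meet the initial condition $u_2(0,\cdot)=0$. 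The remainders $r_\rho,\tilde r_\rho=O(\rho^{-1})$ in $L^2(Q)$ are produced by the Carleman estimate for $\Box$ with linear weight $\pm\psi$, which at the same time provides the boundary control that renders $\mathcal R$ negligible: since $\partial_\nu\psi=-\nu\cdot\omega_0$ has a definite sign on each of $\partial\Omega_{\pm,\omega_0}$, the unobserved boundary terms carry a favorable sign and can be absorbed or shown to vanish. This boundary bookkeeping — discarding the unmeasured traces through the sign of the Carleman weight while keeping the GO data norm under the exponential control dictated by the weight — is the main obstacle, and it is precisely what fixes the final rate.

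Finally I would insert the GO solutions into the identity. Its principal part equals $a_0b_0\,e^{-i\zeta\cdot(t,x)}$, so with $a_0b_0$ a cutoff equal to $1$ on $Q$ the left-hand side converges, as $\rho\to\infty$, to the Fourier transform $\widehat q(\zeta)$, and for finite $\rho$ it approximates $\widehat q(\zeta)$ up to an $O(\rho^{-1})$ error. Bounding the observed pairing by $\norm{B_{q_1}-B_{q_2}}$ times the data norm $\norm{(g,v_1)}_{\mathcal H_F(\partial Q)}$ gives an estimate of the form $\abs{\widehat q(\zeta)}\le C\big(e^{c\rho}\norm{B_{q_1}-B_{q_2}}+\rho^{-1}M\big)$ for $\abs\zeta\le\rho$. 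Splitting $\norm{q}_{L^2(Q)}^2$ into the low frequencies $\abs\zeta\le R$, controlled by this estimate, and the high frequencies $\abs\zeta>R$, controlled by $M$ through the a priori regularity of $q$, and then optimizing over the parameters, the exponential amplification of the measurement yields a logarithmic modulus of continuity. The two-scale structure of the Carleman estimate needed to reach frequencies of size $R$ — for which the parameter must be taken exponentially large, whence the threshold $\gamma_*=e^{-e^{AR_2}}$ — refines this into the stated two-regime bound: a Lipschitz estimate for $\gamma=\norm{B_{q_1}-B_{q_2}}\ge\gamma_*$ and the double-logarithmic bound $C(\ln\abs{\ln\gamma})^{-1/2}$ for $0<\gamma<\gamma_*$.
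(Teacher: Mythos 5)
The decisive gap is your claim that inserting the GO solutions yields $\abs{\widehat q(\zeta)}\le C\bigl(e^{c\rho}\norm{B_{q_1}-B_{q_2}}+\rho^{-1}M\bigr)$ for \emph{all} $\abs{\zeta}\le\rho$. With phases built on the null function $\psi=t-x\cdot\omega_0$, the modulated phase $\rho\psi-\tfrac12\zeta\cdot(t,x)$, $\zeta=(\sigma,\xi)$, satisfies the eikonal equation only up to the cross term: a direct computation gives
\begin{equation*}
\Bigl(\rho\,\partial_t\psi-\frac{\sigma}{2}\Bigr)^2-\Bigl|\rho\nabla_x\psi-\frac{\xi}{2}\Bigr|^2=-\rho\,(\sigma+\omega_0\cdot\xi)+\frac{\sigma^2-\abs{\xi}^2}{4},
\end{equation*}
so unless $\sigma=-\omega_0\cdot\xi$ the conjugated operator carries a zeroth-order term of size $O(\rho)$ that no transport equation for $a_0,a_1$ can remove, and the remainders are not $O(\rho^{-1})$. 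Consequently the product $u_2\tilde u_1$ can only test $q$ against characters $e^{-i\zeta\cdot(t,x)}$ with $\zeta$ on the characteristic set $\{\tau=\omega\cdot\xi\}$; and since the partial data (Dirichlet data supported in $F$, Neumann trace read on $G$) forces $\omega$ to stay in a small neighborhood $U$ of $\omega_0$, you obtain the Fourier transform of $q$ only on the thin cone-like set $E_1=\{(\tau,\xi):\ \tau=\omega\cdot\xi,\ \omega\in U\}$ — equivalently, the light-ray transform $\mathcal Rq(\cdot,\omega)$, $\omega\in U$ — and never on a full ball. Your frequency-splitting step therefore cannot even start. This is precisely where the paper inserts the ingredient your proposal lacks: since $q$ is compactly supported, $\mathcal F(q)$ is real analytic with quantitative derivative bounds, and the continuous-dependence result for analytic continuation (Proposition 2, after Apraiz--Escauriaza and Vessella) propagates the estimate from the positive-measure set $E=E_1\cap B_{\min(\rho/2,1)}$ to the ball $B_R$ at the cost of a factor $e^{(1+n)(1-b)R}$ and a H\"older exponent $b\in(0,1)$. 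That step is also the true source of the stated rate: balancing $e^{2(1+n)(1-b)R/b}$ against $\lambda^{-2\alpha/(4+2\alpha)}$ forces $\lambda\sim e^{AR}$, whence the threshold $\gamma^*=e^{-e^{AR}}$ and the double logarithm $(\ln\abs{\ln\gamma})^{-1/2}$. Your own heuristic — a plain splitting with ball-wide Fourier data — would produce a single-log rate, which is an internal signal that a step is missing; your attribution of the double log to ``two-scale'' Carleman parameters does not correspond to any mechanism in the argument.

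Two secondary points. First, the support condition $\textrm{supp}\,\tau_{0,1}u_2\subset F$ cannot be enforced by cutting off the amplitudes along null lines: a cutoff destroys the transport equations and only makes the lateral trace small, not zero. The paper instead builds $u_2$ with an exact correction $z$ solving an initial-boundary value problem with prescribed data on $\Sigma_{+,\epsilon/2,-\omega}$ (Theorem 5, via \cite[Lemma 3]{Ki2}), controlled only in $L^2$ by $C\lambda^{-1/2}$, while the other solution $u_1$ must be taken in $H^2(Q)$ (Theorem 3, built through H\"ormander's $B_{p,\kappa}$ calculus) so that all traces in the Green identity \eqref{t3a} make sense and the Carleman estimate \eqref{c1a} can absorb the unobserved terms on $\Sigma_{+,\omega}$ and $\partial_t u(T,\cdot)$. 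Second, your assertion that the zero extension of $q$ lies in $H^1(\R^{1+n})$ is false: \eqref{thmma} gives $q=0$ on the lateral boundary only, with no information at $t=0$ and $t=T$. The paper uses instead $q\in\mathcal C^\alpha(\overline Q)$, $\alpha=1-(n+1)/p$, from the Sobolev embedding, to compare the mollified ray transform $V_{\delta,q}$ with $\mathcal Rq$ up to $O(\delta^\alpha)$, and the exponent $1/2$ in the final estimate comes from the interpolation $\norm{q}_{L^2(Q)}\le C\norm{q}_{H^1(Q)}^{1/2}\norm{q}_{H^{-1}}^{1/2}$ at the very end.
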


Let us observe that  this stability estimate is the first that is stated with the data considered in \cite{Ki2}, where uniqueness is stated with conditions that seems to be one of the weakest so far. Moreover, it appears that with the paper of \cite{A}, this paper is the first where stability is stated for global determination of general time dependent potential appearing in wave equation from boundary measurements.

The main tools in our analysis are suitable  geometric optics (GO in short) solutions,  Carleman estimates and results of stability in  analytic continuation. More precisely, following the approach of \cite{Ki2} combined with arguments used by \cite{CKS1,BJY,HW} (see also \cite{BU,KSU,NS} for the original aproach in the case of elliptic equations), we consider suitable geometric optics  solutions for our problem associated to Carleman estimate with linear weight. In contrast to \cite{Ki2}, we recover the time dependent potential not from its Fourier transform but from its  light-ray transform (see the proof of Theorem \ref{thm1}). This approach make it possible to derive stability even in the case $n=2$. Note also that in contrast to \cite{Ki2}, for the stability issue it is necessary to consider GO lying in $H^2(Q)$ (and not only in $H^1(Q)$).

\subsection{Outline}

This paper is organized as follows. In Section 2 we treat the direct problem. We recall  some properties of solutions of \eqref{eq1} and we give a result of smoothing for the difference of boundary operators $B_{q_1}-B_{q_2}$  associated to  this problem. In Section 3, using some results of \cite{Ch}, \cite{Ho1} and \cite{Ho2}, we build  GO   solutions, similar to \cite{Ki} and \cite{BJY}, associated to \eqref{eq1} and lying in $H^2(Q)$. In Section 4,  we recall some results of \cite{Ki} about  Carleman estimates for the wave equation with linear weight and GO solutions vanishing on parts of the boundary. Then combining these tools with the GO solutions of Section 3 we prove Theorem \ref{thm1}.
\vspace{5mm}
\ \\
\textbf{Acknowledgements}. The author would like to thank Mourad Bellassoued and Eric Soccorsi   for their   remarks and  suggestions.

\section{Functional spaces}
In this section following \cite{Ki2} we recall some properties of the IBVP \eqref{eq1}. According to  \cite[Proposition 1]{Ki2}, for any $(g,v_1)\in \mathcal H_{F}(\partial Q)$  the IBVP \eqref{eq1} with $q=v_0=0$ admits a unique solution $\mathcal P_0(g,v_1)$ and we can define $\norm{\cdot}_{\mathcal H_F(\partial Q)}$ by 
\[\norm{(g,v_1)}_{\mathcal H_F(\partial Q)}=\norm{\mathcal P_0(g,v_1)}_{L^2(Q)}.\]
 Applying \cite[Proposition 4]{Ki2}, we can extend the map 
\[\tau_{1,1}v=\partial_\nu v_{|\Sigma},\quad \tau_{1,2}v= v_{|t=T},\quad v\in\mathcal C^\infty(\overline{Q})\]
on $H_\Box (Q)$. Then, in light of \cite[Proposition 2]{Ki2}, we can define the boundary operator $$B_q: \mathcal H_F(\partial Q)\ni(g,v_1)\mapsto(\tau_{1,1}u_{|G},\tau_{1,2}u)$$ 
with $u\in L^2(Q)$ the unique weak solution of the IBVP \eqref{eq1} with $v_0=0$. Moreover, in view of \cite[Proposition 2]{Ki2}, $B_q$ is bounded from $\mathcal H_F(\partial Q)$ to $ H^{-3}(0,T; H^{-\frac{1}{2}}(G'))\times H^{-2}(\Omega)$.

Now consider the operator $B_{{q}_1}-B_{{q}_2}$ for $q_1,\ q_2 \in L^{\infty}(Q)$. We have the following smoothing  result.
\begin{prop}\label{p7} Let $q_1,\ q_2 \in L^{\infty}(Q)$. Then the operator $B_{{q}_1}-B_{{q}_2}$ is a bounded operator from $\mathcal H(\partial Q)$ to $L^2(G)\times H^1(\Omega)$.

\end{prop}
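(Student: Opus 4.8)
The plan is to exploit the cancellation of the leading, and most singular, part of the two boundary operators by passing to the difference of the two solutions. Fix $(g,v_1)\in\mathcal H_F(\partial Q)$ and let $u_1,u_2\in L^2(Q)$ be the weak solutions of \eqref{eq1} with $v_0=0$ associated to $q_1$ and $q_2$, so that $(B_{q_1}-B_{q_2})(g,v_1)=(\tau_{1,1}w_{|G},\tau_{1,2}w)$ with $w=u_1-u_2$. First I would use the well-posedness of \cite{Ki2} (see \cite[Proposition 2]{Ki2}) to record the bound $\norm{u_1}_{L^2(Q)}+\norm{u_2}_{L^2(Q)}\leq C\norm{(g,v_1)}_{\mathcal H_F(\partial Q)}$. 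Writing $q_1u_1-q_2u_2=q_1w+(q_1-q_2)u_2$, the function $w$ then solves $\Box w+q_1w=f$ with homogeneous initial and boundary data and source $f=-(q_1-q_2)u_2$. The key observation is that, since $q_1,q_2\in L^\infty(Q)$ and $u_2\in L^2(Q)$, the source satisfies $f\in L^2(Q)$ with $\norm{f}_{L^2(Q)}\leq\norm{q_1-q_2}_{L^\infty(Q)}\norm{u_2}_{L^2(Q)}\leq C\norm{(g,v_1)}_{\mathcal H_F(\partial Q)}$.

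Next I would invoke the improved regularity available for this reduced problem. In contrast to $u_1$ and $u_2$ taken separately, the equation for $w$ carries vanishing initial and boundary data together with an $L^2(Q)$ right-hand side, so the standard energy estimates for the wave equation yield $w\in\mathcal C([0,T];H^1_0(\Omega))\cap\mathcal C^1([0,T];L^2(\Omega))$, the potential term $q_1w$ being absorbed into the source and handled by Gronwall's inequality. This immediately controls the second component of the boundary operator, since $\tau_{1,2}w=w_{|t=T}\in H^1_0(\Omega)\subset H^1(\Omega)$ with $\norm{w_{|t=T}}_{H^1(\Omega)}\leq C\norm{f}_{L^2(Q)}\leq C\norm{(g,v_1)}_{\mathcal H_F(\partial Q)}$.

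For the first component I would appeal to the hidden (trace) regularity of finite-energy solutions of the wave equation, applied with $F=f-q_1w\in L^1(0,T;L^2(\Omega))$ and vanishing initial data. This provides $\partial_\nu w\in L^2(\Sigma)$ together with $\norm{\partial_\nu w}_{L^2(\Sigma)}\leq C\norm{f}_{L^2(Q)}$; restricting to $G\subset\Sigma$ gives $\tau_{1,1}w_{|G}\in L^2(G)$ with the same control. Chaining the three estimates then shows $\norm{(B_{q_1}-B_{q_2})(g,v_1)}_{L^2(G)\times H^1(\Omega)}\leq C\norm{(g,v_1)}_{\mathcal H_F(\partial Q)}$ with $C$ independent of $(g,v_1)$, which is exactly the asserted boundedness.

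I expect the main obstacle to be this last trace estimate for $\partial_\nu w$. The normal derivative of a merely finite-energy solution is not covered by the usual trace theorem, which would require roughly $w\in H^{3/2+\epsilon}(Q)$, and one must instead rely on the sharp trace regularity theory for second-order hyperbolic equations. The reduction and the energy estimate are otherwise routine; the decisive gain in regularity that makes the whole argument work comes precisely from the algebraic identity $q_1u_1-q_2u_2=q_1w+(q_1-q_2)u_2$, which trades the very weak initial datum $v_1$ for homogeneous data and an interior $L^2(Q)$ source.
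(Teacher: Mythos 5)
Your proposal is correct and follows essentially the same route as the paper: the paper likewise sets $u=u_1-u_2$, observes that it solves $\partial_t^2u-\Delta u+q_1u=(q_2-q_1)u_2$ with homogeneous initial and lateral data and an $L^2(Q)$ source controlled by $\norm{(g,v_1)}_{\mathcal H_F(\partial Q)}$, and then invokes Theorem A.2 of \cite{BCY} (based on the sharp hidden-regularity result of \cite{LLT} for $q=0$) to get $u\in\mathcal C^1([0,T];L^2(\Omega))\cap\mathcal C([0,T];H^1_0(\Omega))$ with $\partial_\nu u\in L^2(\Sigma)$ and the corresponding estimate. The only difference is presentational: you separate the interior energy estimate from the hidden trace regularity and absorb $q_1w$ via Gronwall, whereas the paper cites a single theorem delivering both at once.
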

 \begin{proof} Let $u_1,u_2$ be respectively the unique solution of  the IBVP \eqref{eq1} for $q=q_1$ and $q=q_2$ and $v_0=0$. Then $u=u_1-u_2$ solves
\[\left\{ \begin{array}{rcll} \partial_t^2u-\Delta u+q_1u& = & (q_2-q_1)u_2, & (t,x) \in Q ,\\ 

u_{\vert t=0}=\partial_t u_{\vert t=0}&=&0,\\
u_{\vert\Sigma}& = & 0.&\end{array}\right.
\]Since $(q_2-q_1)u_2\in L^2(Q)$, in view of Theorem A.2 in \cite{BCY}  (see also Theorem 2.1 in \cite{LLT} for $q=0$), $u\in \mathcal C^1([0,T];L^2(\Omega))\cap \mathcal C([0,T];H^1_0(\Omega))$ with $\partial_\nu u\in L^2(\Sigma)$. Moreover, we have the following energy estimate

\[\norm{u}_{\mathcal C^1([0,T];L^2(\Omega))}+\norm{u}_{\mathcal C([0,T];H^1_0(\Omega))} +\norm{\partial_\nu u}_{L^2(\Sigma)}\leq C\norm{q_1-q_2}_{L^\infty(Q)}\norm{u_2}_{L^2(Q)}.\]
It follows  $\tau_{1,1}u_{|G}\in L^2(G)$, $\tau_{1,2}u\in H^1(\Omega)$ with
\[\norm{ \tau_{1,1}u}_{ L^2(G)}+\norm{\tau_{1,2}u}_{H^1(\Omega)}\leq C\norm{(g,v_1)}_{\mathcal H_F(\partial Q)},\]
where $C$ depends  on  $\Omega$, $T$ and $M\geq \norm{q_1}_{L^\infty(Q)}+\norm{q_2}_{L^\infty(Q)}$.
Finally, we complete the proof by recalling that
\[(\tau_{1,1}u_{|G},\tau_{1,2}u) =(\tau_{1,1}{u_1}_{|G},\tau_{1,2}u_1)-(\tau_{1,1}{u_2}_{|G},\tau_{1,2}u_2)=(B_{{q}_1}-B_{{q}_2})(g,v_1).\]\end{proof}

\section{Smooth geometric optics solutions without boundary conditions}

The goal of this section is to build GO solutions $u\in H^2(Q)$ associated to the equation 
\begin{equation}\label{eqGO1}\partial_t^2u-\Delta u+q(t,x)u=0\quad \textrm{on } Q.\end{equation}
More precisely, for $\lambda>0$, $\omega\in\mathbb S^{n-1}=\{y\in\R^n:\ |y|=1\}$, $\phi\in\mathcal C^\infty(\R^n)$ we consider solutions of this equation of the form
\begin{equation}\label{CGO1} u=e^{-\lambda(t+x\cdot\omega)}(\chi(t,x)+w(t,x))\end{equation}
with $u\in H^2(Q)$ and $\chi(t,x)=\phi(x+t\omega)$.
Here $w$ is the remainder term  in the asymptotic expansion of $u$ with respect to $\lambda$ and we have
\[\norm{w}_{H^1(Q)}\leq \frac{C}{\lambda}\]
with $C>0$ independent of $\lambda$.  In order to build such GO solutions, we first introduce some well known results of H\"ormander about solutions of  PDEs with constant coefficients  of the form $P(D)u=f$ on $Q$ with $P$ a polynomial of $n+1$ variables with complex  valued coefficients and $D=-i(\partial_{t},\partial_{x})$. 
\subsection{Solutions of PDEs with constant coefficients }
We start this subsection by recalling some properties of  solutions of PDEs of the form $P(D)u=f$ with constant coefficients.
For $P$  a polynomial of $n+1$ variable, let $\tilde{P}$ be defined by
\[\tilde{P}(\mu,\xi)=\left(\sum_{k\in\mathbb N}\sum_{\alpha\in\mathbb N^{n}}\abs{ \partial^k_\mu\partial^\alpha_{\xi}P(\mu,\xi)}^2\right)^{\frac{1}{2}},\quad \mu\in\R,\ \xi\in\R^n.\]

\begin{Thm}\label{ttt1}\emph{(Theorem 7.3.10, \cite{Ho1})}For every  $P\neq0$ polynomial of $n+1$ variables  one can find a distribution of finite order $E_P\in D'(\R^{1+n})$ such that $P(D)E_P=\delta$.\end{Thm}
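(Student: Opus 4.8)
The plan is to establish this classical \emph{Malgrange--Ehrenpreis} existence result by inverting $P(D)$ through the Fourier transform, deforming the contour of integration into the complex domain so as to avoid the (possibly nonempty) real zero set of $P$. Formally one would like to take $E_P=\mathcal F^{-1}(1/P)$, but $1/P(\xi)$ need not be locally integrable on $\R^{1+n}$ when $P$ vanishes on the reals. The remedy exploits that for a test function $\phi\in\mathcal C^\infty_0(\R^{1+n})$ the transform $\hat\phi$ extends to an entire function of exponential type (Paley--Wiener), so the integral defining $E_P$ may be taken over a contour of the form $\xi\mapsto\xi+i\eta(\xi)$ with $\eta$ bounded, along which $|P|$ can be kept away from $0$.

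First I would normalize: after a linear change of variables we may assume that the coefficient of $\zeta_1^m$ in $P$ (with $m=\deg P$) does not vanish, so that for each fixed $\xi'\in\R^n$ the map $\zeta_1\mapsto P(\zeta_1,\xi')$ is a degree-$m$ polynomial with nonvanishing leading term. The heart of the argument is then the division estimate: there is a constant $C=C(m,n)$ such that for every $\xi\in\R^{1+n}$
\[
\frac{1}{2\pi}\int_0^{2\pi}\frac{\tilde P(\xi)}{\bigl|P(\xi+e^{i\theta}e_1)\bigr|}\,\d\theta\leq C,
\]
where $e_1$ is the first coordinate vector and $\tilde P$ is the quantity introduced above. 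This says that on the unit circle in the complex $\zeta_1$-direction centered at $\xi_1$ the modulus $|P|$ is, on average, comparable to $\tilde P(\xi)$, which controls $P$ together with all its derivatives at $\xi$. It is proved by applying the fundamental theorem of algebra to $\tau\mapsto P(\xi+\tau e_1)$ together with elementary lower bounds for polynomials on the unit circle, and the standard comparison properties of $\tilde P$. Since $\tilde P(\xi)$ is bounded below by a positive constant (it dominates the top-order coefficient of $P$), this in particular furnishes, for each $\xi$, points on the shifted contour where $|P|\geq c>0$.

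I would then define, for $\phi\in\mathcal C^\infty_0(\R^{1+n})$,
\[
\langle E_P,\phi\rangle=\frac{1}{(2\pi)^{1+n}}\,\frac{1}{2\pi}\int_{\R^{1+n}}\int_0^{2\pi}\frac{\hat\phi\bigl(-\xi-e^{i\theta}e_1\bigr)}{P(\xi+e^{i\theta}e_1)}\,\d\theta\,\d\xi.
\]
On the shifted contour the imaginary part is bounded by $1$, so Paley--Wiener gives $|\hat\phi(-\xi-e^{i\theta}e_1)|\leq C_N(1+|\xi|)^{-N}$ for every $N$, and the division estimate makes the double integral absolutely convergent; moreover the resulting bound involves only finitely many derivatives of $\phi$, so $E_P\in D'(\R^{1+n})$ is a distribution of finite order. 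Finally I would verify $P(D)E_P=\delta$ by testing against ${}^tP(D)\phi=P(-D)\phi$: its Fourier transform carries a factor $P$ that cancels the denominator, leaving the entire function $\hat\phi$; the $\theta$-average over the unit circle then collapses to the value at its center by the mean value property for holomorphic functions, and the remaining integral over $\R^{1+n}$ is exactly the Fourier inversion formula at the origin, namely $\phi(0)=\langle\delta,\phi\rangle$.

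The main obstacle is the division estimate of the second step: securing a uniform averaged lower bound for $|P|$ on the shifted contour in terms of $\tilde P$. This is precisely the role played by the auxiliary quantity $\tilde P$ defined above, and it is the one genuinely nontrivial ingredient; the convergence of the defining integral and the identity $P(D)E_P=\delta$ are then comparatively routine consequences of Paley--Wiener and the mean value property.
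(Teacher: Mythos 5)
The paper itself gives no proof of this statement: it is quoted verbatim from H\"ormander (Theorem 7.3.10 of \cite{Ho1}, i.e.\ the Malgrange--Ehrenpreis theorem), so your attempt must be measured against H\"ormander's classical circle-average proof, whose overall architecture (complexify one direction, average over the unit circle, Paley--Wiener decay, mean-value property to verify $P(D)E_P=\delta$) you recall correctly. The fatal problem is that your key ``division estimate'' is false, and it is exactly the point where the classical proof is more subtle. Counterexample: take $P(\zeta)=\zeta_1$ and $\xi=e_1$; then $P(\xi+e^{i\theta}e_1)=1+e^{i\theta}$, the integrand of your estimate behaves like $|\theta-\pi|^{-1}$ near $\theta=\pi$, so $\int_0^{2\pi}|P(\xi+e^{i\theta}e_1)|^{-1}d\theta=\infty$ while $\tilde P(\xi)\geq 1$. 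More generally the $\theta$-average of $1/|P|$ diverges whenever a root of $\tau\mapsto P(\xi+\tau e_1)$ lies on the unit circle, and it admits no uniform bound as roots approach the circle. Nor can you retreat to ``the bound holds for a.e.\ $\xi$ and the double integral still converges'': for $P(\zeta)=\zeta_1^2$ one has $|P(\xi+e^{i\theta}e_1)|^{-1}\sim\left((\xi_1-1)^2+(\theta-\pi)^2\right)^{-1}$ near $(\xi_1,\theta)=(1,\pi)$, which is not locally integrable in the two variables $(\xi_1,\theta)$, so the double integral you propose as the definition of $\langle E_P,\phi\rangle$ diverges for generic $\phi$ (the Paley--Wiener decay is irrelevant to this local divergence). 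With the definition gone, the finite-order bound and the Fubini/mean-value verification of $P(D)E_P=\delta$ --- both of which presuppose absolute convergence --- collapse as well.

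The repair, which is the actual content of H\"ormander's proof, is a compensating Blaschke-type weight rather than a lower bound on $|P|$. Writing $p_\xi(\tau)=P(\xi+\tau e_1)=P_m(e_1)\prod_j(\tau-\tau_j(\xi))$ with $P_m(e_1)\neq 0$ normalized to $1$, set $g_\xi(\tau)=\prod_{|\tau_j|>1}(\tau-\tau_j)\prod_{|\tau_j|\leq 1}(1-\overline{\tau_j}\,\tau)$; then $|g_\xi|=|p_\xi|$ on the unit circle and $|g_\xi(0)|\geq 1$. Inserting the factor $g_\xi(e^{i\theta})/g_\xi(0)$ into your integrand makes $|g_\xi/p_\xi|=1$ on the circle, so the integrand is dominated by $|\hat\phi(-\xi-e^{i\theta}e_1)|$, and Paley--Wiener now gives absolute convergence together with the finite-order bound; when testing against $P(-D)\phi$ the factor $p_\xi$ cancels, $\tau\mapsto\hat\phi(-\xi-\tau e_1)g_\xi(\tau)$ is holomorphic in the closed disc, and the mean value property yields $\hat\phi(-\xi)g_\xi(0)$, so the prefactor $1/g_\xi(0)$ cancels and Fourier inversion gives $\phi(0)$ (a measurable selection of the roots $\tau_j(\xi)$ keeps the weight measurable in $\xi$). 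Note that the true one-variable lemma points in the opposite direction from yours: for monic $p$ of degree $m$ and $f$ analytic in the closed unit disc, $|f(0)|\leq \frac{1}{2\pi}\int_0^{2\pi}|f(e^{i\theta})p(e^{i\theta})|\,d\theta$ --- circle averages of $|P|$ dominate point values, whereas circle averages of $1/|P|$ cannot be controlled by $1/\tilde P$. Alternative classical routes with the same a priori estimate are Malgrange's Hahn--Banach argument, or the shifted-contour proof choosing for each $\xi'$ one of $m+1$ horizontal lines in the $\zeta_1$-plane staying at distance $\gtrsim 1/m$ from the roots; as written, however, your argument does not prove the theorem.
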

Such distributions $E_P$ are called fundamental solutions of $P$. Note that
\[E_P*(P(D)u)=u,\quad u\in \mathcal E'(\R^{1+n}),\]
\[P(D)(E_P*f)=f,\quad f\in \mathcal E'(\R^{1+n}),\]
 where $\mathcal E'(\R^{1+n})$ is the set of distributions with compact support. Thus, for all $f\in \mathcal E'(\R^{1+n})$,  $u=E_P*f$ is a solution of $P(D)u=f$ . Let us give some information about the regularity of such a solution. For this purpose we need the following definitions introduced in \cite{Ho2}.
 \begin{defi} A positive function $\kappa$ defined in $\R^{1+n}$ will be called a temperate weight function if there exist  positive constants $C$ and $N$  such that
 \[\kappa(\zeta+\eta)\leq C(1+\abs{\zeta})^{N}\kappa(\eta),\quad \zeta,\eta\in\R^{1+n}.\]
 The set of all such functions $\kappa$ will be denoted by $\mathcal K$.\end{defi}
 Notice that, for all polynomial of $n+1$ variables $P$, $\tilde{P}\in \mathcal K$.
 \begin{defi} If $\kappa\in \mathcal K$ and $1\leq p\leq \infty$, we denote by $B_{p,\kappa}$ the set of all temperate distribution $u\in\mathcal S'(\R^{1+n})$ such that $\hat{u}$ is a function and
 \[\norm{u}_{p,\kappa}=\left(\frac{1}{(2\pi)^{1+n}}\int_{\R^{n}}\int_\R\abs{\kappa(\mu,\xi)\hat{u}(\mu,\xi)}^pd\mu d\xi\right)^{\frac{1}{p}}<\infty.\]
When $p=\infty$ we shall interpret $\norm{u}_{p,\kappa}$ as ess. sup$\abs{\kappa(\mu,\xi)\hat{u}(\mu,\xi)}$.
 We denote by $B_{p,\kappa}^{loc}$ the set of $u\in\mathcal S'(\R^{1+n})$ such that for all $\chi\in\mathcal C_0^{\infty}(\R^{1+n})$ we have $\chi u\in B_{p,\kappa}$.\end{defi}
 \begin{rem} \label{r1}  Let 
  \[\kappa_1(\mu,\xi)=(1+\abs{(\mu,\xi)}^2)^{\frac{1}{2}},\quad \tau\in\R,\ \eta\in\R^n.\]
Then, in view of Example 10.1.2 of \cite{Ho2}, one can easily show that $\kappa_1\in \mathcal K$  and $B_{2,\kappa_1}=H^{1}(\R^{1+n})$. \end{rem}
  \begin{rem} \label{r2}In view of Theorem 10.1.12 of \cite{Ho2}, for $\kappa'_1,\kappa'_2\in\mathcal K$, $\kappa=\kappa'_1\cdot\kappa'_2$, $u_1\in B_{p,\kappa'_1}\cap \mathcal E'(\R^{1+n})$ and $u_2\in B_{\infty,\kappa'_2}$, we have $u_1*u_2\in B_{p,\kappa}$ and
   \begin{equation}\label{f2}\norm{u_1*u_2}_{B_{p,\kappa}}\leq \norm{u_1}_{B_{p,\kappa'_1}}\norm{u_2}_{B_{\infty,\kappa'_2}}.\end{equation}\end{rem}
    
\begin{Thm}\emph{(Theorem 10.21, \cite{Ho2})} Every polynomial of $n+1$ variables $P\neq0$, has a fundamental solution $E_P\in B_{\infty,\tilde{P}}^{loc}$ such that $\frac{E_P}{\cosh(\abs{(t,x)})}\in B_{\infty,\tilde{P}}$ and 
   \begin{equation}\label{f3}\norm{\frac{E_P}{\cosh(\abs{(t,x)})}}_{B_{\infty,\tilde{P}}}\leq C\end{equation}
   with $C>0$ a constant depending only on the degree of $P$.
\end{Thm}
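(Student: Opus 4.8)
The statement is the Malgrange--Ehrenpreis theorem in the sharp form due to H\"ormander, and I would prove it by realizing $E_P$ as an inverse Fourier--Laplace transform whose frequency contour is pushed a \emph{bounded} distance into the complex domain, so as to stay away from the real zeros of $P$. Write $N=n+1$ and let $\zeta=\xi+i\eta\in\C^{N}$ denote the dual variable, so that one formally wants $\widehat{E_P}=1/P$ and hence $P(D)E_P=\delta$. The obstruction is that $P$ may vanish on large subsets of $\R^{N}$, where $1/P$ fails to be locally integrable; the whole point of the construction is to replace the real contour $\R^{N}$ by a perturbed one $\{\xi+i\eta(\xi)\}$, with $\abs{\eta(\xi)}$ uniformly bounded, on which $\abs{P}$ is bounded below by a fixed multiple of $\tilde{P}$.

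The analytic heart of the argument is the following lower bound, which I would establish first: there are constants $c>0$ and $\eta(\xi)\in\R^{N}$, with $\abs{\eta(\xi)}\leq1$, such that
\[\sup_{\abs{\eta}\leq1}\abs{P(\xi+i\eta)}\geq c\,\tilde{P}(\xi),\qquad \xi\in\R^{N}.\]
This is purely a statement about polynomials: the map $\eta\mapsto P(\xi+i\eta)$ is a polynomial of degree $m=\deg P$ in $\eta$ whose coefficients are exactly $\tfrac{i^{\abs{\alpha}}}{\alpha!}\partial^{\alpha}P(\xi)$, and on the finite-dimensional space of polynomials of degree $\leq m$ the supremum norm over the unit ball is equivalent to the maximum of the coefficients, with constants depending only on $m$ and $N$; since $\tilde{P}(\xi)\asymp\max_{\alpha}\abs{\partial^{\alpha}P(\xi)}$, the lower bound follows. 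Choosing $\eta(\xi)$ to (nearly) attain this supremum, and verifying that the selection can be made measurable, I would then define
\[\langle E_P,\psi\rangle=\frac{1}{(2\pi)^{N}}\int_{\R^{N}}\frac{\hat\psi(-\xi-i\eta(\xi))}{P(\xi+i\eta(\xi))}\,d\xi,\qquad \psi\in\mathcal C^{\infty}_{0}(\R^{N}).\]

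Granting this, the two conclusions come out cleanly. Because the contour satisfies $\abs{\operatorname{Im}\zeta}=\abs{\eta(\xi)}\leq1$, one has $\abs{e^{i\langle x,\zeta\rangle}}\leq e^{\abs{x}}\leq2\cosh\abs{x}$, which is precisely what forces $E_P/\cosh(\abs{(t,x)})$ to be a tempered distribution. Its Fourier transform is, up to convolution with the rapidly decaying transform of $1/\cosh$ (which absorbs harmlessly against the slowly varying weight $\tilde{P}$), controlled by $1/\abs{P(\xi+i\eta(\xi))}\leq (c\,\tilde{P}(\xi))^{-1}$; hence $\sup_\xi\abs{\tilde{P}(\xi)\,\widehat{(E_P/\cosh)}(\xi)}\leq C$ with $C$ depending only on $m$, which is exactly \eqref{f3}.

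I expect the main obstacle to be verifying $P(D)E_P=\delta$ rather than merely a parametrix. Pairing against $\psi$ and transposing, the factor $P$ in the denominator cancels the symbol produced by $P(D)$, leaving the integral of an \emph{entire} function (here I would invoke Paley--Wiener to extend $\hat\psi$ to horizontal strips with rapid decay); one then wants to deform the contour $\{\xi+i\eta(\xi)\}$ back to $\R^{N}$ to recover $\psi(0)$. The difficulty is that $\eta(\cdot)$ is only measurable, so this deformation is not immediate: one must approximate it by a locally constant (staircase) selection and check that the contributions across the jumps of the staircase cancel or vanish in the limit. Handling this staircase deformation, together with the measurable selection of $\eta$, are the delicate points; the polynomial lower bound of the second paragraph is the other essential input, and once both are in hand the remaining estimates are routine.
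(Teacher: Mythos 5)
Your coefficient bound $\sup_{\abs{\eta}\leq 1}\abs{P(\xi+i\eta)}\geq c\,\tilde{P}(\xi)$ is correct (equivalence of norms on the finite-dimensional space of polynomials of degree $\leq\deg P$), but the construction breaks at the decisive step $P(D)E_P=\delta$, and the defect is not a technicality that a limiting argument can absorb. After transposing and cancelling $P$, the identity you need is
\[\int_{\R^{N}}\hat\psi\bigl(-\xi-i\eta(\xi)\bigr)\,d\xi=\int_{\R^{N}}\hat\psi(-\xi)\,d\xi,\]
i.e.\ that the graph of your selection is an admissible deformation of $\R^{N}$ for the closed holomorphic $N$-form $\hat\psi(-\zeta)\,d\zeta_1\wedge\cdots\wedge d\zeta_N$. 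Even for \emph{smooth} $\eta$ this requires the pullback Jacobian $\det\bigl(I+i\,D\eta(\xi)\bigr)$ in the integrand, which your formula omits; for a staircase selection that factor is invisible ($D\eta=0$ a.e.) but Stokes' theorem then produces wall contributions along the jump hypersurfaces which do \emph{not} cancel. Already in $N=1$, taking $\eta(\xi)=1$ for $\xi>0$ and $\eta(\xi)=0$ for $\xi\leq 0$, Cauchy's theorem in the strip gives
\[\int_0^{\infty}\hat\psi(-\xi-i)\,d\xi=\int_0^{\infty}\hat\psi(-\xi)\,d\xi-i\int_0^{1}\hat\psi(-it)\,dt,\]
and for $\psi\geq 0$ nontrivial the segment term is $\int_0^1\int\psi(x)e^{-xt}\,dx\,dt>0$; a single jump already destroys the identity, so your expectation that the jump contributions "cancel or vanish in the limit" is unfounded, and the limit of staircase approximations simply reproduces these walls. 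Since $\eta(\xi)$ is dictated pointwise by where $\abs{P(\xi+i\eta)}$ is large, you have no control whatsoever over them: what you define is a distribution with $P(D)E_P-\delta$ equal to a generically nonzero superposition of such boundary terms, i.e.\ at best a parametrix-like object. Nor can you repair this by smoothing the selection and inserting the Jacobian: a \emph{continuous} bounded selection with $\abs{P(\xi+i\eta(\xi))}\geq c\,\tilde P(\xi)$ does not exist in general, because interpolating between two good shifts may force the contour across the zero set of $P$ — this obstruction is precisely why no proof in the literature proceeds this way.

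For comparison, the paper quotes this result from H\"ormander (Theorem 10.2.1 of \cite{Ho2}) without proof, and H\"ormander's argument is structured exactly to avoid a $\xi$-dependent shift of the full contour: after a real linear change of variables one arranges that the principal part does not vanish in a fixed direction $\eta_0$, and the deformation takes place in a \emph{single} complex variable, for each fixed value of the remaining real variables; there one-variable Cauchy theory in a bounded strip gives exact contour independence for every fixed $\xi'$, so the measurable dependence of the chosen line (or circle) on $\xi'$ is harmless. The division by $P$ and the weight $\tilde P$ are then handled by averaging over circles $\xi+e^{i\theta}\eta_0$ together with a one-variable lemma comparing a weighted mean of $\abs{P(\xi+e^{i\theta}\eta_0)}$ with $\tilde P(\xi)$, the constants depending only on $\deg P$; the uniformly bounded imaginary displacement is what produces the $\cosh(\abs{(t,x)})$ factor, as in your heuristic. (Wagner's explicit formula for the Malgrange--Ehrenpreis theorem is in the same spirit: finitely many \emph{constant} shifts $\lambda_j\eta_0$ combined by Lagrange interpolation.) The common feature of all working proofs is that the division by $P$ is effected through an exact residue or mean-value identity over shifts that are constant in the deformed variable, not through a pointwise maximizing measurable selection; your outline would need to be rebuilt on such an identity, after which the remaining estimates (including the convolution step, which also needs the temperate-weight inequality $\tilde P(\zeta+\theta)\leq C(1+\abs{\theta})^{\deg P}\tilde P(\zeta)$ to be quoted) are indeed routine.
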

 Such a fundamental solution will be denoted by regular fundamental solution. Let us remark that in our construction of  geometric optics solutions we need to consider an operator $E$ such that $P(D)E=\textrm{Id}$ for some polynomial of $n+1$ variables $P\neq0$. Using the properties of regular fundamental solutions, H\"ormander proved in Theorem 10.3.7 of \cite{Ho2} that such a operator exists and it is a bounded operator of $L^2(X)$ for $X$ a bounded open set of $\R^{1+n}$. In contrast to elliptic equations and parabolic equations (see  Subsection 2.1 and 3.6 of \cite{Ch}), for hyperbolic equations  we can not build GO lying in $H^2(Q)$ by applying the result of  \cite{Ho2}. What we can actually build from this result is GO lying in $H^1(Q)$ (e.g. \cite[Proposition 3]{Ki}) . Therefore, we need to consider the following.

 \begin{prop}\label{tt5} Let  $P\neq0$ be a  polynomial of $n+1$ variables. Then there exists  an operator 
  \[E:\ H^{1}(Q)\to H^{1}(Q)\]
   such that:
 \begin{enumerate}
 \item $P(D)Ef=f,\quad f\in H^{1}(Q)$,
 \item for all  polynomial of $n+1$ variables $S$ such that $\frac{\tilde{S}}{\tilde{P}}$ is bounded, we have $S(D)E\in B(H^{1}(Q))$,  and
  \begin{equation}\label{pp5a}\norm{S(D)E}_{B(H^{1}(Q))}\leq C\sup_{(\mu,\xi)\in\R\times\R^{n}}\frac{\abs{S(\mu,\xi)}}{\tilde{P}(\mu,\xi)},\quad k=0,1,\end{equation}
  where $C>0$ depends only on the degree of $P$, $\Omega$ and $T$.
  \end{enumerate}
\end{prop}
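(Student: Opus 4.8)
The plan is to adapt H\"ormander's construction of a right inverse of $P(D)$ (Theorem 10.3.7 of \cite{Ho2}) to the Sobolev scale $H^1$, the only genuinely new point being that an element of $H^1(Q)$ cannot be extended by zero, so a bounded extension operator has to be inserted before convolving. First I would fix a bounded open set $X\subset\R^{1+n}$ with $\overline Q\subset X$ and a function $\chi\in\mathcal C_0^\infty(\R^{1+n})$ equal to $1$ on a neighborhood of the difference set $\overline X-\overline X=\{z-z':z,z'\in\overline X\}$. Letting $E_P$ be a regular fundamental solution of $P$, so that $P(D)E_P=\delta$ and $E_P/\cosh(\abs{(t,x)})\in B_{\infty,\tilde P}$ with the bound \eqref{f3}, I set $u_P:=\chi E_P=[\chi\cosh(\abs{(t,x)})]\cdot\tfrac{E_P}{\cosh(\abs{(t,x)})}$. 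Since $\chi\cosh(\abs{(t,x)})\in\mathcal C_0^\infty(\R^{1+n})$ and multiplication by a compactly supported smooth function is bounded on $B_{\infty,\tilde P}$, we get $u_P\in B_{\infty,\tilde P}\cap\mathcal E'(\R^{1+n})$ with $\norm{u_P}_{B_{\infty,\tilde P}}\leq C$, where $C$ depends only on the degree of $P$, $\Omega$ and $T$ (through $X$ and $\chi$). Finally, since $Q=(0,T)\times\Omega$ is a Lipschitz domain, I fix a bounded extension operator $\mathcal E_0:H^1(Q)\to H^1(\R^{1+n})$ whose range consists of functions supported in $X$ (compose a standard extension with a cutoff equal to $1$ on $\overline Q$), and define $Ef:=\bigl(u_P*\mathcal E_0 f\bigr)_{|Q}$.

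To establish property (1), I would use the Leibniz formula to write $P(D)u_P=\chi\,P(D)E_P+R=\delta+R$, where $R=P(D)(\chi E_P)-\delta$ is a sum of terms each carrying at least one derivative of $\chi$; hence $\mathrm{supp}(R)\subset\mathrm{supp}(\nabla\chi)$, which is disjoint from the neighborhood of $\overline X-\overline X$ on which $\chi\equiv1$. For $f\in H^1(Q)$ this gives $P(D)(u_P*\mathcal E_0 f)=\mathcal E_0 f+R*\mathcal E_0 f$, and for $x\in Q$ and $y\in\mathrm{supp}(\mathcal E_0 f)\subset\overline X$ one has $x-y\in\overline X-\overline X$, so $R(x-y)=0$ and $(R*\mathcal E_0 f)(x)=0$. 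Since $\mathcal E_0 f=f$ on $Q$, this yields $P(D)Ef=f$ on $Q$, and $Ef\in H^1(Q)$ once property (2) is known with $S=1$.

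For property (2), let $S$ be a polynomial with $\tilde S/\tilde P$ bounded, so that $\abs S/\tilde P$ is bounded as well (because $\abs S\leq\tilde S$). Writing $u_1:=\mathcal E_0 f\in B_{2,\kappa_1}\cap\mathcal E'(\R^{1+n})$ and $u_2:=u_P\in B_{\infty,\tilde P}$, Remark~\ref{r2} applied with $\kappa_1':=\kappa_1$ and $\kappa_2':=\tilde P$ (recall $B_{2,\kappa_1}=H^1(\R^{1+n})$ by Remark~\ref{r1}) gives $u_1*u_2\in B_{2,\kappa_1\tilde P}$ with $\norm{u_1*u_2}_{B_{2,\kappa_1\tilde P}}\leq\norm{u_1}_{B_{2,\kappa_1}}\norm{u_2}_{B_{\infty,\tilde P}}$. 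On the Fourier side $S(D)$ maps $B_{2,\kappa_1\tilde P}$ into $B_{2,\kappa_1}=H^1(\R^{1+n})$ with norm at most $\sup_{(\mu,\xi)}\abs{S(\mu,\xi)}/\tilde P(\mu,\xi)$, since $\kappa_1\abs{S}\,\abs{\widehat{u_1*u_2}}=(\abs{S}/\tilde P)\,\kappa_1\tilde P\,\abs{\widehat{u_1*u_2}}$. Combining these bounds with $\norm{\mathcal E_0 f}_{H^1(\R^{1+n})}\leq C\norm{f}_{H^1(Q)}$ and $\norm{u_P}_{B_{\infty,\tilde P}}\leq C$, and using that restriction to $Q$ does not increase the $H^1$ norm, I obtain $\norm{S(D)Ef}_{H^1(Q)}\leq C\,\sup_{(\mu,\xi)}\tfrac{\abs{S(\mu,\xi)}}{\tilde P(\mu,\xi)}\,\norm{f}_{H^1(Q)}$, which is \eqref{pp5a}.

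I expect the main obstacle to be the interplay between the global convolution identity and solvability only on $Q$: everything must be arranged so that the remainder $R*\mathcal E_0 f$ vanishes precisely on $Q$, which is what forces both the choice of $\chi\equiv1$ on $\overline X-\overline X$ and the use of a compactly supported extension rather than extension by zero (the latter being unavailable in $H^1$). Once this geometric bookkeeping is in place, the quantitative estimates reduce to the mapping properties of convolution encoded in Remark~\ref{r2} together with the uniform control \eqref{f3} on the regular fundamental solution.
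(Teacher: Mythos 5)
Your proof is correct, and it rests on exactly the paper's toolkit: a Lions--Magenes extension operator, a regular fundamental solution with the uniform bound \eqref{f3}, the fact that multiplication by a function in $\mathcal C_0^\infty(\R^{1+n})$ is bounded on $B_{\infty,\tilde P}$ (via the $\cosh$ trick and Lemma 2.1 of \cite{Ch}), and the convolution estimate of Remark~\ref{r2}. The one genuine structural difference is where the truncation happens. The paper defines $Ef=\left(E_P*(\chi\mathcal Ef)\right)_{|Q}$ with the \emph{untruncated} fundamental solution, so property (1) is immediate from $P(D)E_P=\delta$, and a cutoff $\psi\equiv1$ on $\overline{B_R-B_R}$ enters only inside the estimate through the restriction identity \eqref{pp5b} (valid because for $x\in Q$ and $y\in\mathrm{supp}(\chi\mathcal Ef)$ one has $x-y\in B_R-B_R$). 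You instead truncate at the outset, setting $u_P=\chi E_P$ with $\chi\equiv1$ near $\overline X-\overline X$: this makes the quantitative part direct, but you then owe the Leibniz remainder $R=P(D)(\chi E_P)-\delta$, which you correctly annihilate on $Q$ by the same difference-set geometry ($x-y\in\overline X-\overline X$ for $x\in Q$, $y\in\mathrm{supp}\,\mathcal E_0f$, while $\mathrm{supp}\,D^\alpha\chi\subset\mathrm{supp}\,\nabla\chi$ for $\abs{\alpha}\geq1$ and $\chi(0)=1$ gives the $\delta$ term). So the two proofs are dual bookkeeping choices: yours trades the paper's trivial verification of (1) for a short exactness argument, while the paper trades a trivial definition of $E$ for the extra identity \eqref{pp5b}; note that your insistence on an extension with range supported in $X$ is what simultaneously legitimizes Remark~\ref{r2} (which needs $u_1\in\mathcal E'(\R^{1+n})$) and the support argument, playing the role of the paper's multiplication by $\chi$. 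Your second reordering in (2) --- convolving first into $B_{2,\kappa_1\tilde P}$ and then applying $S(D)$ as a Fourier multiplier, instead of the paper's route of putting $S(D)\psi E_P\in B_{\infty,1}$ before convolving into $B_{2,\kappa_1}=H^1(\R^{1+n})$ --- is a cosmetic commutation of the same Fourier-side inequality, and your observation that $\abs{S}\leq\tilde S$ makes $\abs{S}/\tilde P$ bounded (and, with $S=1$, that $\tilde P$ is bounded below since the top-order derivative of $P$ is a nonzero constant) closes the loop exactly as needed for \eqref{pp5a}.
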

  
 \begin{proof} Let $f\in H^1(Q)$.  In view of   Theorem 2.2 and 8.1 in Chapter 1 of \cite{LM1}, there exists an extension operator $\mathcal E\in  B\left(H^1(Q), H^{1}(\R^{1+n})\right)$ such that $\mathcal Ef_{\vert Q}=f$. Here we consider the extension operator $\mathcal E$ introduced by \cite{LM1}. Set $\chi\in\mathcal C^\infty_0(\R^{1+n})$ and $R>0$ such that $\chi=1$ on a neighborhood of $\overline{Q}$  and supp$\chi\subset B_R$ with $B_R$ the ball of radius $R$ and of center $0$ of $\R^{1+n}$.  Let $E_P$ be a regular fundamental solution of $P$. Now consider the operator
 \[E:f\longmapsto \left(E_P*(\chi \mathcal Ef)\right)_{\vert Q}.\]
 Clearly we have 
 \[P(D)E_P*(\chi \mathcal Ef)=\chi \mathcal Ef\]
 and it follows that
 \[P(D)Ef=\left(\chi \mathcal Ef\right)_{\vert Q}=f\]
 which proves (1). Now let us show (2). For this purpose, let $\psi\in\mathcal C^\infty_0(\R^{1+n})$ be such that $\psi=1$ on the closure of $B_R-B_R=\{x-y:\ x,y\in B_R\}$ and notice that
 \begin{equation}\label{pp5b}\left(E_P*(\chi \mathcal Ef)\right)_{\vert Q}=\left((\psi E_P)*(\chi \mathcal Ef)\right)_{\vert Q}.\end{equation}
Note that 
\[\abs{\mathcal F(S(D)\psi E_P)(\mu,\xi)}\leq \frac{\abs{S(\mu,\xi)}}{\tilde{P}(\mu,\xi)} \tilde{P}(\mu,\xi)\abs{\mathcal F\left(\psi \cosh(\abs{(t,x)}) \frac{E_P}{\cosh(\abs{(t,x)})}\right)(\mu,\xi)},\ \mu\in\R,\ \xi\in\R^n.\]
Then, since $\psi \cosh(\abs{(t,x)})\in\mathcal C^\infty_0(\R^{1+n})$, from Lemma 2.1 of \cite{Ch} we deduce that \[\psi \cosh(\abs{(t,x)}) \frac{E_P}{\cosh(\abs{(t,x)})}\in B_{\infty,\tilde{P}}\] and
\[\norm{\psi \cosh(\abs{(t,x)}) \frac{E_P}{\cosh(\abs{(t,x)})}}_{B_{\infty,\tilde{P}}}\leq C_1\norm{\frac{E_P}{\cosh(\abs{(t,x)})}}_{B_{\infty,\tilde{P}}}\leq C'\]
with $C'>0$ a constant depending only on the degree of $P$ and $\chi$. It follows that $S(D)\psi E_P\in B_{\infty,1}$ and
\[\norm{S(D)\psi E_P}_{B_{\infty,1}}\leq  C'\sup_{(\mu,\xi)\in\R\times\R^{n}}\frac{\abs{S(\mu,\xi)}}{\tilde{P}(\mu,\xi)}.\]
In view of Remark \ref{r2}, since $\chi \mathcal Ef\in H^1(\R^{1+n})=B_{2,\kappa_1}$ with $\kappa_1$ introduced in Remark \ref{r1}, we have  $S(D)(\psi E_P)*(\chi \mathcal Ef)=(S(D)\psi E_P)*(\chi \mathcal Ef)\in B_{2,\kappa_1}$ and
\[\begin{aligned}\norm{S(D)(\psi E_P)*(\chi \mathcal Ef)}_{H^1(\R^{1+n})}&=\norm{S(D)(\psi E_P)*(\chi \mathcal Ef)}_{B_{2,\kappa_1}}\\
\ &\leq \norm{S(D)\psi E_P}_{B_{\infty,1}}\norm{\chi \mathcal Ef}_{H^1(\R^{1+n}))}\\
\ &\leq C\sup_{(\mu,\xi)\in\R\times\R^{n}}\frac{\abs{S(\mu,\xi)}}{\tilde{P}(\mu,\xi)}\norm{f}_{H^1(Q)}\end{aligned}\]
with $C>0$ a constant depending only on the degree of $P$, $\chi$, $\Omega$ and $T$. Thus, in view of \eqref{pp5b}, we have $S(D)Ef\in H^1(Q)$ and 
\[\norm{S(D)Ef}_{H^1(Q)}\leq \norm{(\psi E_P)*(\chi \mathcal Ef)}_{H^{1}(Q)}\leq C\sup_{(\mu,\xi)\in\R\times\R^{n}}\frac{\abs{S(\mu,\xi)}}{\tilde{P}(\mu,\xi)}\norm{f}_{H^1(Q)}.\]

\end{proof}
 
 Armed with this result, we are now in position to build GO of the form \eqref{CGO1} lying in $H^2(Q)$.
  
\subsection{Construction of geometric optics solutions}
The goal of this subsection is to apply the results of the previous subsection in order to build geometric optics of the form \eqref{CGO1}.
For this purpose, for all $s\in\R$ and all $\omega\in\mathbb S^{n-1}$, we consider the operators $P_{s,\omega}$ defined by $P_{s,\omega}=e^{-s(t+x\cdot\omega)}\Box e^{s(t+x\cdot\omega)}$. One can check that
\[P_{s,\omega}=p_{s,\omega}(D_t,D_x)=\Box +2s(\partial_t-\omega\cdot \nabla_x)\]
with $D_t=-i\partial_t$, $D_x=-i\nabla_x$ and $p_{s,\omega}(\mu,\xi)=-\mu^2+\abs{\xi}^2+i\lambda (\mu-\omega\cdot\xi)$, $\mu\in\R$, $\xi\in\R^n$.
 Applying Theorem \ref{tt5} to $P_{-\lambda,\omega}$ we obtain the following intermediate result.

\begin{lem}\label{ppl1} For every $\lambda>1$ and $\omega\in \mathbb S^{n-1}$ there exists a bounded operator $E_{\lambda,\omega}:\ H^1(Q)\to H^1(Q)$ such that:
\begin{equation}\label{ppl1a}P_{-\lambda,\omega} E_{\lambda,\omega}f=f,\quad f\in H^1(Q),\end{equation}
\begin{equation}\label{ppl1b} \norm{E_{\lambda,\omega}}_{\mathcal B(L^2(Q))}\leq C\lambda^{-1},\quad f\in L^2(Q),\end{equation}
\begin{equation}\label{ppl1c} E_{\lambda,\omega}\in \mathcal B(H^1(Q);H^2(Q))\quad \textrm{and}\quad \norm{E_{\lambda,\omega}}_{\mathcal B(H^1(Q);H^2(Q))}\leq C\end{equation}
with $C>$ depending only on $T$ and $\Omega$.\end{lem}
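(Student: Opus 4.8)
The plan is to apply Proposition~\ref{tt5} to the polynomial $P=p_{-\lambda,\omega}$, whose symbol is $p_{-\lambda,\omega}(\mu,\xi)=-\mu^2+\abs{\xi}^2-2i\lambda(\mu-\omega\cdot\xi)$, and to extract the three conclusions from suitable lower bounds on the associated weight $\tilde P$. Conclusion \eqref{ppl1a} requires no work: I set $E_{\lambda,\omega}:=E$ for the operator $E$ furnished by Proposition~\ref{tt5}(1), which gives $P_{-\lambda,\omega}E_{\lambda,\omega}f=f$ for all $f\in H^1(Q)$. Both remaining estimates then follow from \eqref{pp5a} once I know how $\sup_{(\mu,\xi)}\abs{S(\mu,\xi)}/\tilde P(\mu,\xi)$ behaves for the monomials $S\in\{1,\mu,\xi_1,\dots,\xi_n\}$, so the only genuine computation is a lower bound for $\tilde P$.

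Since $p_{-\lambda,\omega}$ is quadratic, $\tilde P(\mu,\xi)^2$ is a finite sum of squares of its derivatives, which in particular contains $\abs{\partial_\mu p_{-\lambda,\omega}}^2=4\mu^2+4\lambda^2$ and $\sum_{j=1}^n\abs{\partial_{\xi_j}p_{-\lambda,\omega}}^2=4\abs{\xi}^2+4\lambda^2$ (using $\abs{\omega}=1$). From the first term I read off $\tilde P(\mu,\xi)\geq 2\lambda$ and $\tilde P(\mu,\xi)\geq 2\abs{\mu}$, and from the second $\tilde P(\mu,\xi)\geq 2\abs{\xi}\geq 2\abs{\xi_j}$, for every $(\mu,\xi)\in\R\times\R^n$. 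Consequently $\tilde S/\tilde P$ is bounded and $\sup_{(\mu,\xi)}\abs{S(\mu,\xi)}/\tilde P(\mu,\xi)\leq \tfrac12$ for each first-order monomial $S\in\{\mu,\xi_1,\dots,\xi_n\}$, uniformly in $\omega$ and in $\lambda>1$, while for the constant symbol $S\equiv1$ one has the sharper bound $\sup_{(\mu,\xi)}1/\tilde P(\mu,\xi)\leq 1/(2\lambda)$.

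These bounds yield \eqref{ppl1b} and \eqref{ppl1c}. For \eqref{ppl1b} I apply the $L^2$ version of \eqref{pp5a} (the case $k=0$, discussed below) to $S\equiv1$, for which $S(D)=\textrm{Id}$: this gives $\norm{E_{\lambda,\omega}}_{\mathcal B(L^2(Q))}\leq C\sup_{(\mu,\xi)}1/\tilde P(\mu,\xi)\leq C\lambda^{-1}$, the decay coming precisely from $\tilde P\geq2\lambda$. For \eqref{ppl1c} I use that the $H^2(Q)$-norm is controlled by $\sum_{\abs{\beta}\leq1}\norm{D^\beta E_{\lambda,\omega}f}_{H^1(Q)}$, i.e. by the $H^1(Q)$-norms of $S(D)E_{\lambda,\omega}f$ with $S$ running over $\{1,\mu,\xi_1,\dots,\xi_n\}$; applying \eqref{pp5a} to each such $S$ and summing, and using $\sup_{(\mu,\xi)}\abs{S(\mu,\xi)}/\tilde P(\mu,\xi)\leq\tfrac12$ for all of them, I obtain $\norm{E_{\lambda,\omega}f}_{H^2(Q)}\leq C\norm{f}_{H^1(Q)}$ with $C$ independent of $\lambda$.

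The one point that demands care is the $L^2$ estimate \eqref{ppl1b}: Proposition~\ref{tt5} is phrased at the $H^1$ level, so I must make sure that the same operator $E_{\lambda,\omega}$ is bounded on $L^2(Q)$ with the claimed gain. This amounts to the $k=0$ instance of \eqref{pp5a}, equivalently to checking that the extension operator $\mathcal E$ of \cite{LM1} used in the proof of Proposition~\ref{tt5} is bounded on $L^2(Q)$ as well as on $H^1(Q)$; granting this, the convolution estimate of Remark~\ref{r2} with $p=2$ and $\kappa=1$ (so that $B_{2,1}=L^2(\R^{1+n})$) applied to $(\psi E_P)*(\chi\mathcal E f)$ reproduces the bound, the factor $1/\tilde P\leq1/(2\lambda)$ furnishing the $\lambda^{-1}$ decay. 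Apart from this bookkeeping, the whole lemma rests on the elementary inequality $\tilde P\geq2\lambda$ together with the fact that first-order symbols are dominated by $\tilde P$ uniformly in $\lambda$.
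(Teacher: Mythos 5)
Your proof follows the paper's argument essentially verbatim: both apply Proposition~\ref{tt5} to $P_{-\lambda,\omega}$ and extract all three conclusions from the elementary lower bounds $\tilde p_{-\lambda,\omega}(\mu,\xi)\geq 2\lambda$, $\tilde p_{-\lambda,\omega}(\mu,\xi)\geq 2\abs{\mu}$ and $\tilde p_{-\lambda,\omega}(\mu,\xi)\geq 2\abs{\xi_j}$ fed into \eqref{pp5a} with $S\in\{1,\mu,\xi_1,\dots,\xi_n\}$. The only divergence is your extra care with the $L^2$ formulation of \eqref{ppl1b}: the paper actually proves $\norm{E_{\lambda,\omega}}_{\mathcal B(H^1(Q))}\leq C\lambda^{-1}$ and declares \eqref{ppl1b} fulfilled (this $H^1$ version being what the fixed-point argument of Theorem~\ref{p2} really uses), so your remark on $L^2$-boundedness of the extension operator is a legitimate patch of a statement/proof mismatch rather than a different method.
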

\begin{proof} In light of Theorem \ref{tt5}, there exists a bounded operator $E_{\lambda,\omega}:\ H^1(Q)\to H^1(Q)$, defined from a fundamental solution associated to $P_{-\lambda,\omega}$,  such that \eqref{ppl1a} is fulfilled. In addition, for all differential operator  $Q(D_t,D_x)$ such that ${Q(\mu,\xi)\over \tilde {p}_{-\lambda,\omega}(\mu,\xi)}$ is bounded, we have $Q(D_t,D_x)E_{\lambda,\omega}\in\mathcal B(H^1(Q))$ and
\begin{equation}\label{ppl1d}\norm{Q(D_t,D_x)E_{\lambda,\omega}}_{\mathcal B(H^1(Q))}\leq C\sup_{(\mu,\xi)\in\R^{1+n}}{|Q(\mu,\xi)|\over \tilde {p}_{-\lambda,\omega}(\mu,\xi)},\end{equation}
where $C>0$ depends only on $\Omega$, $T$ and $\tilde {p}_{-\lambda,\omega}$ is given by
\[\tilde{p}_{-\lambda,\omega}(\mu,\xi)=\left(\sum_{k\in\mathbb N}\sum_{\alpha\in\mathbb N^n}|\partial^k_\mu\partial^\alpha_\xi p_{-\lambda,\omega}(\mu,\xi)|^2\right)^{{1\over2}}.\]
Note that $\tilde{p}_{-\lambda,\omega}(\mu,\xi)\geq \abs{\im \partial_\mu p_{-\lambda,\omega}(\mu,\xi)}=2\lambda$. Therefore, \eqref{ppl1d} implies
\[\norm{E_{\lambda,\omega}}_{\mathcal B(H^1(Q))}\leq C\sup_{(\mu,\xi)\in\R^{1+n}}{1\over \tilde {p}_{-\lambda,\omega}(\mu,\xi)}\leq C\lambda^{-1}\]
and \eqref{ppl1b} is fulfilled. In a same way, we have $\tilde{p}_{-\lambda,\omega}(\mu,\xi)\geq \abs{\re \partial_\mu p_{-\lambda,\omega}(\mu,\xi)}=2|\mu|$ and $\tilde{p}_{-\lambda,\omega}(\mu,\xi)\geq \abs{\re \partial_{\xi_i} p_{-\lambda,\omega}(\mu,\xi)}=2|\xi_i|$, $i=1,\ldots,n$ and $\xi=(\xi_1,\ldots,\xi_n)$. Therefore, in view of condition (2) of Theorem \ref{tt5}, for all $h\in H^1(Q)$, we have $\partial_tE_{\lambda,\omega}h$, $\partial_{x_1}E_{\lambda,\omega}h,\ldots,\partial_{x_n}E_{\lambda,\omega}h\in H^1(Q)$ with
\[\norm{\partial_tE_{\lambda,\omega}h}_{H^1(Q)}+\sum_{j=1}^n\norm{\partial_{x_1}E_{\lambda,\omega}h}_{H^1(Q)}\leq C\left(\sup_{(\mu,\xi)\in\R^{1+n}}{|\mu|+|\xi_1|+\ldots+|\xi_n|\over \tilde {p}_{-\lambda,\omega}(\mu,\xi)}\right)\norm{h}_{H^1(Q)}\leq C(n+1)\norm{h}_{H^1(Q)}\]
Thus, we get $E_{\lambda,\omega}\in \mathcal B(H^1(Q);H^2(Q))$ with
\[\norm{E_{\lambda,\omega}}_{\mathcal B(H^1(Q);H^2(Q))}\leq C\sup_{(\mu,\xi)\in\R^{1+n}}{|\mu|+|\xi_1|+\ldots+|\xi_n|\over \tilde {p}_{-\lambda,\omega}(\mu,\xi)}\leq C(n+1)\]
and \eqref{ppl1c} is proved.\end{proof}

Armed with this result, we are now in position to build  geometric optics solutions of the form \eqref{CGO1} lying in $H^2(Q)$.
  \begin{Thm}\label{p2} Let $q\in W^{1,p}(Q)$, with $p>n+1$, be such that $\norm{q}_{W^{1,p}(Q)}\leq M$,  $\omega\in\mathbb S^{n-1}$, $\lambda>1$.  Then, there exists $\lambda_0>1$ such that for $\lambda\geq \lambda_0\norm{\chi}_{H^3(Q)}$ the equation \eqref{eqGO1} admits a solution $u\in H^2(Q)$ of the form \eqref{CGO1} with
\begin{equation}\label{p2a}\norm{w}_{H^k(Q)}\leq C\lambda^{k-2}\norm{\chi}_{H^3(Q)},\quad k=1,2,\end{equation}
where $C$ and $\lambda_0$ depend on $\Omega$, $T$, $M$, $n$, $p$.
\end{Thm}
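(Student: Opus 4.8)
I would construct the remainder $w$ as the solution of a fixed-point (or directly solvable) equation obtained by substituting the ansatz \eqref{CGO1} into \eqref{eqGO1}. Writing $u=e^{-\lambda(t+x\cdot\omega)}(\chi+w)$ and using $P_{-\lambda,\omega}=e^{-\lambda(t+x\cdot\omega)}\Box\, e^{\lambda(t+x\cdot\omega)}$ (with the sign conventions of the paper), the equation $\Box u+qu=0$ becomes, after factoring out $e^{-\lambda(t+x\cdot\omega)}$,
\[
P_{-\lambda,\omega}(\chi+w)+q(\chi+w)=0.
\]
The key point is that $\chi(t,x)=\phi(x+t\omega)$ is annihilated by the principal transport part: since $\chi$ is constant along the lines $x+t\omega$, one checks $\Box\chi=0$ and $(\partial_t-\omega\cdot\nabla_x)\chi=0$, so $P_{-\lambda,\omega}\chi=0$. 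Hence $w$ must solve
\[
P_{-\lambda,\omega}w=-q\chi-qw=-q\chi-qw.
\]

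The plan is to solve this by Neumann series using the right inverse $E_{\lambda,\omega}$ from \lemref{ppl1}. I would rewrite the equation as $(I+E_{\lambda,\omega}q)w=-E_{\lambda,\omega}(q\chi)$, where $q$ denotes the multiplication operator. The crucial estimate is \eqref{ppl1b}, $\norm{E_{\lambda,\omega}}_{\mathcal B(L^2(Q))}\leq C\lambda^{-1}$: combined with the bound $\norm{qv}_{L^2(Q)}\leq\norm{q}_{L^\infty(Q)}\norm{v}_{L^2(Q)}\leq C(M,n,p)\norm{v}_{L^2(Q)}$ coming from the Sobolev embedding $W^{1,p}(Q)\hookrightarrow L^\infty(Q)$ (valid for $p>n+1$), this gives $\norm{E_{\lambda,\omega}q}_{\mathcal B(L^2(Q))}\leq C\lambda^{-1}<\tfrac12$ for $\lambda$ large. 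Thus $I+E_{\lambda,\omega}q$ is invertible on $L^2(Q)$ and $w=-(I+E_{\lambda,\omega}q)^{-1}E_{\lambda,\omega}(q\chi)$ is well defined, with $\norm{w}_{L^2(Q)}\leq C\lambda^{-1}\norm{q\chi}_{L^2(Q)}\leq C\lambda^{-1}\norm{\chi}_{H^3(Q)}$.

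To upgrade to the $H^k$ bounds \eqref{p2a} I would use the second mapping property \eqref{ppl1c}, $E_{\lambda,\omega}\in\mathcal B(H^1(Q);H^2(Q))$ with norm bounded independently of $\lambda$, together with \eqref{ppl1b}. The mechanism is that each application of $E_{\lambda,\omega}$ either gains a power $\lambda^{-1}$ (in the $L^2$ operator norm) or gains one derivative at the cost of only a bounded factor. I would estimate $\norm{q\chi}_{H^1(Q)}$ using that $W^{1,p}(Q)$ is an algebra-type multiplier on $H^1$ for $p>n+1$ (so $\norm{qv}_{H^1}\leq C\norm{q}_{W^{1,p}}\norm{v}_{H^1}$), yielding $\norm{w}_{H^1(Q)}\leq C\lambda^{-1}\norm{\chi}_{H^3(Q)}$ and, by feeding the $H^1$ bound back through \eqref{ppl1c} once more, $\norm{w}_{H^2(Q)}\leq C\norm{\chi}_{H^3(Q)}$. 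Interpolation or a direct two-step argument then gives both cases $k=1,2$. The smallness condition $\lambda\geq\lambda_0\norm{\chi}_{H^3(Q)}$ is exactly what keeps the Neumann series convergent while tracking the $\chi$-dependence.

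The main obstacle I anticipate is controlling the multiplication operator $v\mapsto qv$ on $H^1(Q)$ with a constant depending only on $M,n,p$, rather than on $\norm{q}_{L^\infty}$ alone: proving $\norm{qv}_{H^1(Q)}\leq C\norm{q}_{W^{1,p}(Q)}\norm{v}_{H^1(Q)}$ requires estimating $\nabla(qv)=(\nabla q)v+q\nabla v$, where the term $(\nabla q)v$ forces a Hölder/Sobolev argument ($\nabla q\in L^p$, $v\in L^{p'}$ via $H^1\hookrightarrow L^{2^*}$) that genuinely needs $p>n+1$. Everything else is the standard Neumann-series construction, but the bookkeeping between the two operator norms in \eqref{ppl1b}–\eqref{ppl1c} has to be done carefully so that the final powers of $\lambda$ come out as $\lambda^{k-2}$.
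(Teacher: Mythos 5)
Your construction follows the paper's proof almost exactly, but it contains one genuine computational error at the very first step. You claim that $\Box\chi=0$ because $\chi$ is constant along the lines $x+t\omega$, and conclude $P_{-\lambda,\omega}\chi=0$. Only half of this is true: $(\partial_t-\omega\cdot\nabla_x)\chi=0$ does hold, and $|\omega|=1$ kills the $\lambda^2$ term, but for $\chi(t,x)=\phi(x+t\omega)$ one has $\partial_t^2\chi=(\omega\cdot\nabla)^2\phi(x+t\omega)$ while $\Delta_x\chi=\Delta\phi(x+t\omega)$, so that $\Box\chi=\bigl((\omega\cdot\nabla)^2-\Delta\bigr)\phi(x+t\omega)$, which is nonzero for a generic $\phi$ in dimension $n\geq 2$ (take $\omega=e_1$ and $\phi(y)=y_2^2$ to get $\Box\chi=-2$). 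Consequently the transport equation for $w$ is the paper's \eqref{p2b}, with source $-\left((\Box+q)\chi+qw\right)$, not $-(q\chi+qw)$. The omission is repairable without changing your scheme, since $\norm{(\Box+q)\chi}_{H^1(Q)}\leq C\norm{\chi}_{H^3(Q)}$ and $E_{\lambda,\omega}$ still gains the factor $\lambda^{-1}$ on this term; but note that the surviving $\Box\chi$ is precisely why the theorem is stated with $\norm{\chi}_{H^3(Q)}$ — under your source $q\chi$ alone, $\norm{\chi}_{H^1(Q)}$ would have sufficed, which should have been a warning sign.

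With the source corrected, your argument coincides with the paper's: the paper defines $\mathcal G(F)=-E_{\lambda,\omega}\left[(\Box+q)\chi+qF\right]$ and runs the fixed point directly in $H^1(Q)$, using exactly the multiplier bound you identify as the main obstacle, namely \eqref{p2b'}, $\norm{qw}_{H^1(Q)}\leq CM\norm{w}_{H^1(Q)}$, proved by Sobolev embedding and H\"older with $p>n+1$; it then applies \eqref{ppl1c} once to the fixed-point identity to land in $H^2(Q)$ with $\norm{w}_{H^2(Q)}\leq C\norm{\chi}_{H^3(Q)}$. Two minor bookkeeping remarks: first, your preliminary Neumann stage in $L^2(Q)$ is not quite supported by the stated lemmas, since the operator furnished by \propref{tt5} and \lemref{ppl1} acts on $H^1(Q)$ (the proof of \eqref{ppl1b} in fact bounds $\norm{E_{\lambda,\omega}}_{\mathcal B(H^1(Q))}$), so the series should be run in $H^1(Q)$ from the start, as you effectively do in your second step; second, the condition $\lambda\geq\lambda_0\norm{\chi}_{H^3(Q)}$ is not what makes the Neumann series converge (that only needs $\lambda\gtrsim M$, independently of $\chi$) — in the paper it serves to keep the fixed point inside a fixed ball of $H^1(Q)$.
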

\begin{proof} We start by recalling that
\[\Box e^{-\lambda(t+x\cdot\omega)}\chi(t,x)=e^{-\lambda(t+x\cdot\omega)}\Box \chi(t,x),\quad (t,x)\in Q.\]
Thus, $w$ should be a solution of
\begin{equation}\label{p2b}\partial_t^2w-\Delta w-2\lambda(\partial_t-\omega\cdot\nabla_{x})w=-\left((\Box+q) \chi(t,x)+qw\right).\end{equation}
Note that since $q \in W^{1,r}(Q)$ with $r>n+1$, using the Sobolev embedding theorem (e.g. \cite[Theorem 1.4.4.1]{Gr}) and H\"older inequality, one can check for all $w\in H^1(Q)$, $qw\in H^1(Q)$ with 
\begin{equation}\label{p2b'}\norm{qw}_{H^1(Q)}\leq CM\norm{w}_{H^1(Q)}\end{equation}
with $C$ depending only on $T$ and $\Omega$, $n$, $p$. Therefore, according to Lemma \ref{ppl1}, we can define $w$ as a solution of the equation $$w=-E_{\lambda,\omega}\left((\Box+q) \chi(t,x)+qw\right),\quad w\in H^1(Q)$$ with $E_{\lambda,\omega}\in\mathcal B(H^1(Q))$ given by Lemma \ref{ppl1}.
For this purpose, we will use a standard fixed point argument associated to the map
\[\begin{array}{rccl} \mathcal G: & H^1(Q) & \to & H^1(Q), \\
 \ \\ & F & \mapsto &-E_{\lambda,\omega}\left[(\Box+q) \chi(t,x)+qF\right]. \end{array}\]
Indeed, in view of \eqref{ppl1b}, fixing $M_1>0$, there  exists $\lambda_0>1$ such that for $\lambda\geq \lambda_0\norm{\chi}_{H^3(Q)}$ the map $\mathcal G$ admits a unique fixed point $w$ in $\{u\in H^1(Q): \norm{u}_{H^1(Q)}\leq M_1\}$. In addition, condition \eqref{ppl1b}-\eqref{ppl1c} imply that $w\in H^2(Q)$ fulfills \eqref{p2a}. This completes the proof.
\end{proof}

\section{Stability estimate}
This section is devoted to the proof of Theorem \ref{thm1}. We start by collecting some tools constructed in \cite{Ki2} that will play an important role in the proof of Theorem \ref{thm1}.

\subsection{Carleman estimate and geometric optics solutions vanishing on parts of the boundary}
The goal of this section is to recall some useful tools for the proof of Theorem \ref{thm1}. We first consider the following Carleman estimates.

\begin{Thm}\label{c1}\emph{(Theorem 2, \cite{Ki2})}  Let $q\in L^\infty(Q)$ and  $u\in\mathcal C^2(\overline{Q})$.  If $u$ satisfies the condition 
 \begin{equation}\label{ttc1}u_{\vert \Sigma}=0,\quad u_{\vert t=0}=\partial_tu_{\vert t=0}=0\end{equation}
then there exists $\lambda_1>1$ depending only on  $\Omega$, $T$ and $M\geq \norm{q}_{L^\infty(Q)}$ such that the estimate
\begin{equation}\label{c1a}\begin{array}{l}\lambda \int_\Omega e^{-2\lambda( T+\omega\cdot x)}\abs{\partial_tu_{\vert t=T}}^2dx+\lambda\int_{\Sigma_{+,\omega}}e^{-2\lambda(t+\omega\cdot x)}\abs{\partial_\nu u}^2\abs{\omega\cdot\nu(x) } d\sigma(x)dt+\lambda^2\int_Qe^{-2\lambda(t+\omega\cdot x)}\abs{u}^2dxdt\\
\leq C\left(\int_Qe^{-2\lambda(t+\omega\cdot x)}\abs{(\partial_t^2-\Delta+q)u}^2dxdt+ \lambda^3\int_\Omega e^{-2\lambda(T+\omega\cdot x)}\abs{u_{\vert t=T}}^2dx+\lambda\int_\Omega e^{-2\lambda(T+\omega\cdot x)}\abs{\nabla_xu_{\vert t=T}}^2dx\right)\\
\ \ \ +C\lambda\int_{\Sigma_{-,\omega}}e^{-2\lambda(t+\omega\cdot x)}\abs{\partial_\nu u}^2\abs{\omega\cdot\nu(x) }d\sigma(x)dt\end{array}\end{equation}
holds true for $\lambda\geq \lambda_1$  with $C$ and $\lambda_1$ depending only on  $\Omega$, $T$ and $M\geq \norm{q}_{L^\infty(Q)}$.
\end{Thm}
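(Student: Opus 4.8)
The plan is to prove this as a standard \emph{conjugation plus multiplier} Carleman estimate, exploiting that the weight $t+\omega\cdot x$ is linear (hence with vanishing Hessian) and null for $\Box$. Since $q$ is lower order, I would first reduce to the case $q=0$: writing $f:=e^{-\lambda(t+\omega\cdot x)}(\partial_t^2-\Delta+q)u-qw$ where $w:=e^{-\lambda(t+\omega\cdot x)}u$, the term $\norm{qw}_{L^2(Q)}\leq M\norm{w}_{L^2(Q)}$ will be absorbed at the end by the gain $\lambda^2\int_Q\abs{w}^2$ for $\lambda$ large. With the notation of Subsection 3.2, $w=e^{-\lambda(t+\omega\cdot x)}u$ solves $P_{\lambda,\omega}w=\Box w+2\lambda(\partial_t-\omega\cdot\nabla_x)w=e^{-\lambda(t+\omega\cdot x)}\Box u$, and $w$ inherits $w_{|\Sigma}=0$, $w_{|t=0}=\partial_tw_{|t=0}=0$. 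Expanding the square of the conjugated equation gives
\[\int_Q e^{-2\lambda(t+\omega\cdot x)}\abs{\Box u}^2dx\,dt=\int_Q\abs{\Box w}^2dx\,dt+4\lambda^2\int_Q\abs{\partial_t w-\omega\cdot\nabla_x w}^2dx\,dt+4\lambda\int_Q(\Box w)(\partial_t w-\omega\cdot\nabla_x w)\,dx\,dt,\]
so the whole estimate hinges on the cross term.

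The core computation is the multiplier identity obtained by integrating $(\Box w)(\partial_t w-\omega\cdot\nabla_x w)$ by parts. Using $w_{|t=0}=\partial_tw_{|t=0}=0$ (which kills all $t=0$ contributions) and $w_{|\Sigma}=0$ (so that $\partial_tw=0$ on $\Sigma$ and $\nabla_x w=(\partial_\nu w)\nu$ there), the standard energy and Rellich--type integrations by parts should yield
\[\int_Q (\Box w)(\partial_t w-\omega\cdot\nabla_x w)\,dx\,dt=\frac12\int_\Omega\left[(\partial_t w)^2+\abs{\nabla_x w}^2-2\,\partial_t w\,(\omega\cdot\nabla_x w)\right]_{t=T}dx+\frac12\int_\Sigma\abs{\partial_\nu w}^2(\omega\cdot\nu)\,d\sigma\,dt.\]
Splitting the lateral integral according to the sign of $\omega\cdot\nu$ produces a favorable term on $\Sigma_{+,\omega}$ (kept on the left) and an unfavorable one on $\Sigma_{-,\omega}$ (moved to the right as $\lambda\int_{\Sigma_{-,\omega}}\abs{\partial_\nu w}^2\abs{\omega\cdot\nu}$, exactly the last term of \eqref{c1a}). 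The hard point, which I expect to be the main obstacle, is that because $\Box$ and $\partial_t-\omega\cdot\nabla_x$ have constant coefficients their commutator vanishes, so \emph{no} bulk positivity $\lambda^2\int_Q\abs{w}^2$ is generated by the cross term; all the careful sign bookkeeping only gives boundary terms at $t=T$ and on $\Sigma$.

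The required interior gain must therefore come from the term $4\lambda^2\int_Q\abs{\partial_t w-\omega\cdot\nabla_x w}^2$, and this is where the null structure enters. The vector field $L=\partial_t-\omega\cdot\nabla_x$ is divergence free and its flow $(t,x)\mapsto(t+s,x-s\omega)$ is volume preserving; following the backward characteristic from any $(t,x)\in Q$ one reaches either $\{t=0\}$ or $\Sigma$, where $w=0$, within parameter length at most $T$. Integrating $Lw$ along these characteristics and applying Cauchy--Schwarz then Fubini gives a Poincaré--type inequality $\int_Q\abs{w}^2\leq C(T)\int_Q\abs{Lw}^2$, whence $4\lambda^2\int_Q\abs{Lw}^2\geq c\lambda^2\int_Q\abs{w}^2$. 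The indefinite final-time cross term $-2\partial_t w(\omega\cdot\nabla_x w)$ I would treat with Young's inequality, retaining a positive multiple of $\lambda\int_\Omega(\partial_t w)^2_{|t=T}$ and sending a controlled multiple of $\lambda\int_\Omega\abs{\nabla_x w}^2_{|t=T}$ to the right.

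Finally I would translate the $w$--quantities back to $u$ via $\partial_t w_{|t=T}=e^{-\lambda(T+\omega\cdot x)}(\partial_tu-\lambda u)_{|t=T}$, $\nabla_x w_{|t=T}=e^{-\lambda(T+\omega\cdot x)}(\nabla_xu-\lambda\omega u)_{|t=T}$ and $\partial_\nu w_{|\Sigma}=e^{-\lambda(t+\omega\cdot x)}\partial_\nu u$ (the last using $u_{|\Sigma}=0$). Expanding the squares, the unwanted contributions reorganize precisely into the right-hand side terms $\lambda^3\int_\Omega e^{-2\lambda(T+\omega\cdot x)}\abs{u_{|t=T}}^2$ and $\lambda\int_\Omega e^{-2\lambda(T+\omega\cdot x)}\abs{\nabla_xu_{|t=T}}^2$. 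Collecting the three favorable terms ($\lambda^2\int_Q$, $\lambda\int_\Omega(\partial_tu)^2_{|t=T}$, $\lambda\int_{\Sigma_{+,\omega}}$) on the left, moving everything else to the right, and absorbing $\norm{qw}_{L^2(Q)}^2\leq M^2\int_Q\abs{w}^2$ into $c\lambda^2\int_Q\abs{w}^2$ for $\lambda\geq\lambda_1$, yields \eqref{c1a}. The delicate parts are the exact signs in the boundary integrations by parts and the null-characteristic Poincaré inequality, which together replace the missing bulk commutator positivity of the degenerate linear weight.
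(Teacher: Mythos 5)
Your proposal is correct and follows essentially the same route as the paper's source for this statement: the paper itself gives no proof, quoting the estimate verbatim from \cite[Theorem 2]{Ki2} with the remark that it is proved there along the lines of \cite{BJY}, and your scheme --- conjugation by the null linear weight $e^{-\lambda(t+\omega\cdot x)}$ so that $e^{-\lambda(t+\omega\cdot x)}\Box u=\Box w+2\lambda Lw$ with $L=\partial_t-\omega\cdot\nabla_x$, the multiplier identity for $\int_Q(\Box w)(Lw)$ producing exactly the $t=T$ energy form and the $\Sigma_{\pm,\omega}$ sign splitting, the Poincar\'e inequality along the characteristics of $L$ supplying the bulk term $\lambda^2\int_Q\abs{w}^2$ in place of the vanishing commutator positivity, and absorption of $qw$ for $\lambda\geq\lambda_1(T,M)$ --- is precisely that standard argument. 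The delicate points you flag do check out: your cross-term identity is the correct one (the $t=T$ form equals $(Lw)^2+\abs{\nabla_x w}^2-(\omega\cdot\nabla_x w)^2\geq(Lw)^2$, and since the weight is null one even has $Lw=e^{-\lambda(t+\omega\cdot x)}Lu$ at $t=T$, which recovers the $\partial_tu_{\vert t=T}$ term without Young's inequality, the $\lambda^3\abs{u_{\vert t=T}}^2$ term then coming only from back-substitution), and the characteristic Poincar\'e inequality $\norm{w}_{L^2(Q)}\leq T\norm{Lw}_{L^2(Q)}$ is valid even for non-convex $\Omega$ because $w$ vanishes on all of $\Sigma$ and at $t=0$, so each backward characteristic may be stopped at its first exit from $Q$.
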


We precise  that  this Carleman estimate has been proved in \cite{Ki2} following some arguments of \cite{BJY}. 

 From now on, for all $y\in\mathbb S^{n-1}$ and all  $r>0$, we set
\[\partial\Omega_{+,r,y}=\{x\in\partial\Omega:\ \nu(x)\cdot y>r\},\quad\partial\Omega_{-,r,y}=\{x\in\partial\Omega:\ \nu(x)\cdot y\leq r\}\]
and $\Sigma_{\pm,r,y}=(0,T)\times \partial\Omega_{\pm,r,y}$. Here and in the remaining of this text we always assume, without mentioning it, that $y$ and $r$ are chosen in such way that $\partial\Omega_{\pm,r,\pm y}$ contain  a non-empty relatively open subset of $\partial\Omega$
. Without lost of generality we can assume that there exists $0<\epsilon<1$ such that for all $\omega\in\{y\in\mathbb S^{n-1}:|y-\omega_0|\leq\epsilon\} $ we have $\partial\Omega_{-,\epsilon,-\omega}\subset F'$. We consider $u\in H_{\Box}(Q)$ satisfying
\begin{equation}
\label{(5.1)}
\left\{
\begin{array}{l}
(\partial_t^2-\Delta  +q(t,x))u=0\ \ \textrm{in }  Q,
\\
u_{\vert t=0}=0,
\\
u=0,\quad \ \textrm{on } \Sigma_{+,\epsilon/2,-\omega},
\end{array}\right.\end{equation}
 of the form
\bel{CGO1a}
u(t,x)=e^{\lambda (t+\omega\cdot x)}\left( \chi(t,x)+z(t,x) \right),\quad (t,x)\in Q,
\ee
where $\omega\in\{y\in\mathbb S^{n-1}:|y-\omega_0|\leq\epsilon\} $, $\chi(t,x)=\phi(x+t\omega)$, $z \in e^{-\lambda (t+\omega\cdot x)}H_{\Box}(Q)$ fulfills: $z(0,x)=-\chi(0,x)$ , $x\in\Omega$, $z=-\chi(t,x)$ on  $\Sigma_{+,\epsilon/2,-\omega}$ and

\bel{CGO1b}
\| z \|_{L^2(Q)}\leq C\lambda^{-\frac{1}{2}}\norm{\chi}_{H^2(Q)}
\ee
with $C$ depending on $F'$, $\Omega$, $T$, $p$, $n$ and $M$. Since $\Sigma\setminus F\subset \Sigma_{+,\epsilon,-\omega}$  and since $\Sigma_{+,\epsilon/2,-\omega}$ is a neighborhood of $\Sigma_{+,\epsilon,-\omega}$ in $\Sigma$,  it is clear that condition \eqref{(5.1)} implies $(\tau_{0,1}u,\tau_{0,3}u)\in\mathcal H_F(\partial Q)$  (recall that for $v\in\mathcal C^\infty(\overline{Q})$, $\tau_{0,1}v=v_{|\Sigma}$, $\tau_{0,3}v=\partial_tv_{|t=0}$).  Repeating some arguments of \cite[Theorem 3]{Ki2}, we prove the following.

\begin{Thm}\label{tt1}  Let $q\in L^\infty(Q)$. For all $\lambda\geq \lambda_1$, with $\lambda_1$ the constant of Theorem \ref{c1}. Then, there exists a solution $u\in H_{\Box}(Q)$ of \eqref{(5.1)} of the form \eqref{CGO1a} with $z$ satisfying \eqref{CGO1b}. \end{Thm}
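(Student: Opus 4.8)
The plan is to construct the correction $z$ not by the fixed-point/fundamental-solution method of Theorem~\ref{p2}, but by a duality argument based on the Carleman estimate of Theorem~\ref{c1}, which is what allows the partial boundary conditions in \eqref{(5.1)} to be imposed. First I would reduce the statement to a solvability result for the conjugated operator. Writing $\beta(t,x)=t+\omega\cdot x$, conjugation gives $e^{-\lambda\beta}\Box e^{\lambda\beta}=P_{\lambda,\omega}=\Box+2\lambda(\partial_t-\omega\cdot\nabla_x)$, so $u=e^{\lambda\beta}(\chi+z)$ solves the equation in \eqref{(5.1)} if and only if $(P_{\lambda,\omega}+q)z=-(P_{\lambda,\omega}+q)\chi$. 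The crucial simplification is that $\chi(t,x)=\phi(x+t\omega)$ satisfies the transport identity $(\partial_t-\omega\cdot\nabla_x)\chi=0$, which kills the $O(\lambda)$ term and leaves
\[(P_{\lambda,\omega}+q)z=-(\Box+q)\chi=:f,\qquad \norm{f}_{L^2(Q)}\leq C\norm{\chi}_{H^2(Q)},\]
the bound using only $q\in L^\infty(Q)$. The requirements $u_{|t=0}=0$ and $u=0$ on $\Sigma_{+,\epsilon/2,-\omega}$ translate into the inhomogeneous conditions $z(0,\cdot)=-\chi(0,\cdot)$ and $z=-\chi$ on $\Sigma_{+,\epsilon/2,-\omega}$. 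Thus it suffices to produce $z$ solving this problem with $\norm{z}_{L^2(Q)}\leq C\lambda^{-1/2}\norm{\chi}_{H^2(Q)}$ and $v:=e^{\lambda\beta}z\in H_{\Box}(Q)$.

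Next I would build $z$ by duality. Setting $\psi=e^{-\lambda\beta}w$, the estimate \eqref{c1a}, applied to test functions $w$ carrying the homogeneous conditions \eqref{ttc1}, reads as a solvability inequality $\lambda\norm{\psi}_{L^2(Q)}\leq C\norm{(P_{\lambda,\omega}+q)\psi}_{L^2(Q)}+(\textrm{boundary terms})$ together with control of the relevant boundary traces of $\psi$. On the range of the adjoint operator I would define the linear functional sending $(P_{\lambda,\omega}+q)\psi$ to the interior pairing of $f$ with $\psi$, augmented by the Green-formula boundary pairings that encode the prescribed data $-\chi(0,\cdot)$ and $-\chi_{|\Sigma_{+,\epsilon/2,-\omega}}$. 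The Carleman estimate shows this functional is bounded with norm $\leq C\lambda^{-1/2}\norm{\chi}_{H^2(Q)}$: the interior pairing alone gains a full power $\lambda^{-1}$, while the initial and face data pair against boundary quantities that \eqref{c1a} controls only with a single power of $\lambda$, so they dictate the weaker rate $\lambda^{-1/2}$ of \eqref{CGO1b}. Extending by Hahn--Banach and representing by Riesz then yields $z\in L^2(Q)$ with the claimed bound satisfying the weak formulation.

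It then remains to check regularity and the boundary conditions. Testing against $\psi\in\mathcal C^\infty_0(Q)$ gives $(P_{\lambda,\omega}+q)z=f$ in the distributional sense, which is equivalent to $(\partial_t^2-\Delta+q)u=0$; hence $\Box u=-qu\in L^2(Q)$, so $u\in H_{\Box}(Q)$ and, since $e^{\lambda\beta}\chi\in H_{\Box}(Q)$, one gets $v=u-e^{\lambda\beta}\chi\in H_{\Box}(Q)$, i.e. $z\in e^{-\lambda\beta}H_{\Box}(Q)$. Testing against $\psi$ with nonvanishing traces recovers, in the sense of the extended trace maps of \cite[Proposition 4]{Ki2}, the conditions $z(0,\cdot)=-\chi(0,\cdot)$ and $z=-\chi$ on $\Sigma_{+,\epsilon/2,-\omega}$, so that $u=e^{\lambda\beta}(\chi+z)$ is a solution of \eqref{(5.1)} of the form \eqref{CGO1a} satisfying \eqref{CGO1b}.

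The main obstacle is the duality set-up itself. One must choose the test-function space so that Theorem~\ref{c1} applies and, at the same time, the Green-formula boundary terms isolate exactly the data on $\{t=0\}\times\Omega$ and on $\Sigma_{+,\epsilon/2,-\omega}$, while the unprescribed traces of $v$ (at the opposite time-end, the Neumann trace, and the complementary part of $\Sigma$) either drop out or are absorbed. This requires a careful matching of the time-orientation and of the illuminated/shadowed roles of the faces between \eqref{(5.1)} and the homogeneous conditions \eqref{ttc1}, together with a correct treatment of the boundary terms on the right-hand side of \eqref{c1a} (the terminal terms and the Neumann term on $\Sigma_{-,\omega}$). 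Once this matching is arranged, the remaining verifications that the attained traces coincide with the data and that the power of $\lambda$ is exactly $\lambda^{-1/2}$ are routine, following the corresponding arguments of \cite[Theorem 3]{Ki2}.
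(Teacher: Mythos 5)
Your plan is, in substance, the machinery that underlies this theorem, but it is not what the paper does at this spot: the paper reduces to the IBVP \eqref{w1}, extends the Dirichlet datum $-\chi$ by a cutoff $\psi$ equal to $1$ on $\partial\Omega_{+,\epsilon/2,-\omega}$ and supported in $\{x\in\partial\Omega:\ \omega\cdot\nu(x)<-\epsilon/3\}$, and then simply invokes \cite[Lemma 3]{Ki2} twice, for the existence of $w\in H_\Box(Q)$ with data prescribed on $\Sigma_{-,\omega}$ and for the weighted estimate yielding \eqref{CGO1b}. The duality argument you sketch is essentially the proof of that cited lemma, so re-deriving it is legitimate in principle, and your reduction via the transport identity $(\partial_t-\omega\cdot\nabla_x)\chi=0$, giving the source $-(\Box+q)\chi$ with an $H^2$ bound, is correct.

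However, your duality set-up has a genuine gap, located exactly at the point you defer as ``careful matching''. You test against functions $\theta$ carrying the homogeneous conditions \eqref{ttc1}, i.e.\ vanishing together with $\partial_t\theta$ at $t=0$ and vanishing on $\Sigma$. For such $\theta$ and $V=e^{\lambda(t+\omega\cdot x)}z$, Green's formula reads
\[\int_Q \left((\Box+q)V\right)\theta\, dxdt-\int_Q V(\Box+q)\theta\, dxdt=\int_\Omega \left(\partial_tV_{|t=T}\,\theta_{|t=T}-V_{|t=T}\,\partial_t\theta_{|t=T}\right)dx+\int_\Sigma V\,\partial_\nu\theta\, d\sigma(x)dt,\]
so the $t=0$ traces of $V$ multiply $\theta_{|t=0}=\partial_t\theta_{|t=0}=0$ and disappear: with this test space the prescribed datum $z(0,\cdot)=-\chi(0,\cdot)$ cannot be encoded in your functional (the pairing you would add is identically zero), while the \emph{unprescribed} traces $V_{|t=T},\partial_tV_{|t=T}$ do appear and are not controlled. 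The construction requires the adjoint test space with conditions at the final time, $\theta_{|t=T}=\partial_t\theta_{|t=T}=0$, $\theta_{|\Sigma}=0$, together with the Carleman estimate obtained from \eqref{c1a} by the substitution $t\mapsto T-t$, $\omega\mapsto-\omega$: that estimate has weight $e^{+\lambda(t+\omega\cdot x)}$ (matching the weight of $u_2$), controls $\partial_t\theta_{|t=0}$ and $\partial_\nu\theta$ on $\Sigma_{-,\omega}$ on its left side with the single power of $\lambda$ you invoke, and carries $\theta_{|t=0}$, $\nabla_x\theta_{|t=0}$ and $\partial_\nu\theta_{|\Sigma_{+,\omega}}$ on its data side, which is precisely what the free traces $\partial_tV_{|t=0}$ and $V_{|\Sigma_{+,\omega}}$ pair against in the Hahn--Banach/Riesz step. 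A second point you gloss over: the boundary term in the Carleman estimate carries the degenerate density $\abs{\omega\cdot\nu(x)}$, so the Dirichlet pairing is bounded only where $\abs{\omega\cdot\nu}$ is bounded below; this is why the paper inserts the cutoff $\psi$ supported in $\{\omega\cdot\nu<-\epsilon/3\}$ (in your version it is saved only because $\abs{\omega\cdot\nu}>\epsilon/2$ on $\Sigma_{+,\epsilon/2,-\omega}$, and this needs to be said). Since this orientation/face matching is the actual content of the construction and your sketch points it the wrong way, the proposal is incomplete as written, although every missing piece is exactly what \cite[Lemma 3]{Ki2} supplies.
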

\begin{proof} Note first that $z$ must satisfy
\begin{equation}
\label{w1}
\left\{
\begin{array}{l}z\in L^2(Q) \\
(\partial_t^2-\Delta+q) (e^{\lambda(t+\omega\cdot x)}z)=-e^{\lambda(t+\omega\cdot x)}(\Box +q)\chi(t,x)\ \ \textrm{in }Q
\\
z(0,x)=-\chi(0,x), \quad   x\in\Omega,
\\
z=-\chi(t,x)\quad \textrm{on } \Sigma_{+,\epsilon/2,-\omega}.
\end{array}\right.\end{equation}
Let $\psi\in\mathcal C^\infty_0(\R^n)$ be such that   supp$\psi\cap\partial\Omega\subset \{x\in\partial\Omega:\ \omega\cdot\nu(x)<-\epsilon/3\}$ and $\psi=1$ on $\{x\in\partial\Omega:\ \omega\cdot\nu(x)<-\epsilon/2\}=\partial\Omega_{+,\epsilon/2,-\omega}$. Choose $v_-(t,x)=-e^{\lambda(t+\omega\cdot x)}\psi(x)\chi(t,x)$,
$v(t,x)=-e^{\lambda(t+\omega\cdot x)}(\Box +q)\chi(t,x)$ and $v_0(x)=-e^{\lambda\omega\cdot x}\chi(0,x)$. Then, in view of \cite[Lemma 3]{Ki2}, there exists $w\in H_\Box(Q)$ such that
\[
\left\{
\begin{array}{ll}
(\partial_t^2-\Delta+q) w=v(t,x)=-e^{\lambda(t+\omega\cdot x)}(\Box +q)\chi(t,x)&  \mbox{in}\; Q,
\\
w(0,x)=v_0(x)=-e^{\lambda\omega\cdot x}\chi(0,x), &  x\in\Omega,
\\
w(t,x)=v_-(t,x)=-e^{\lambda(t+\omega\cdot x)}\psi(x)\chi(t,x),& (t,x)\in\Sigma_{-,\omega}.
\end{array}
\right.\]
Then, for $z=e^{-\lambda(t+\omega\cdot x)} w$ condition \eqref{w1} will be fulfilled. Moreover,  in light of \cite[Lemma 3]{Ki2}, we have  \eqref{CGO1b}.\end{proof}

Armed with these results we are now in position to complete the proof of Theorem \ref{thm1}.

\subsection{Proof of Theorem \ref{thm1}}
In this subsection we complete  the proof of Theorem \ref{thm1}. We start with two intermediate results. From now on we set $q=q_2-q_1$ on $Q$ and  assume  that $q=0$ on $\R^{1+n}\setminus Q$. Without lost of generality we assume that $0\in\Omega$. Without lost of generality we  assume that for all $\omega\in\{y\in\mathbb S^{n-1}:|y-\omega_0|\leq\epsilon\} $ we have $\partial\Omega_{-,\epsilon,\omega}\subset G'$ with $\epsilon$ introduced in the previous subsection (see 2 lines before \eqref{(5.1)}). Let us consider the light-ray transform of $q$ (see \cite{RS} and \cite{St}) given by 
\[\mathcal Rq(x,\omega)=\int_\R q(t,x+t\omega)dt,\quad x\in\R^n,\ \omega\in\mathbb S^{n-1}.\]
Using the Carleman estimates introduced in  \eqref{c1a} and the geometric optics solutions of Theorem \ref{p2}  and Theorem \ref{tt1}, we obtain the following  estimate of $\mathcal R q$.
\begin{lem}\label{l100} Assume that the conditions of Theorem \ref{thm1} are fulfilled. Then, there exists  $\lambda_2>1$,  such that for all $\lambda>\lambda_2$, $\omega\in\{y\in\mathbb S^{n-1}:|y-\omega_0|\leq\epsilon\} $, we have
\begin{equation}\label{t3b}\norm{\mathcal Rq(\cdot,\omega)}_{L^1(\R^n)}\leq C\left(\lambda^{-{\alpha\over4+2\alpha}}+e^{d\lambda}\norm{B_{q_1}-B_{q_2}}\right)\end{equation}
with $\alpha=1-{n+1\over p}$ and $d, C$ depending only on  $\Omega$, $M$, $T$,  $F'$, $G'$, $p$, $n$.\end{lem}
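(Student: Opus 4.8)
The plan is to combine the two families of geometric optics solutions—the exponentially growing solution from Theorem~\ref{p2} and the exponentially decaying solution vanishing on part of the boundary from Theorem~\ref{tt1}—to produce, via the weak formulation of the wave equation, an integral identity in which the leading term reproduces the light-ray transform $\mathcal Rq(\cdot,\omega)$. Concretely, I would take $u_2$ a solution of $(\Box+q_2)u_2=0$ of the form \eqref{CGO1} (growing exponential $e^{-\lambda(t+x\cdot\omega)}$, amplitude $\chi(t,x)=\phi(x+t\omega)$) given by Theorem~\ref{p2}, and let $u_1$ solve $(\Box+q_1)u_1=0$ of the form \eqref{CGO1a} (decaying exponential $e^{\lambda(t+x\cdot\omega)}$, same amplitude $\phi$) given by Theorem~\ref{tt1}, chosen so that $u_1$ vanishes on $\Sigma_{+,\epsilon/2,-\omega}$ and at $t=0$. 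Writing $u=u_2-w$ where $w$ solves the IBVP with source $(q_1-q_2)u_2=qu_2$ and homogeneous Cauchy and lateral data, I would multiply the equation for $w$ by $\overline{u_1}$, integrate over $Q$, and integrate by parts. The boundary terms that survive are controlled by $B_{q_1}-B_{q_2}$ (through $\partial_\nu u_{|G}$ and $u_{|t=T}$), while the interior term produces $\int_Q q\, u_2\,\overline{u_1}\,dx\,dt$.

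Next I would extract the light-ray transform from this interior integral. Substituting the expansions $u_2=e^{-\lambda(t+x\cdot\omega)}(\chi+w_2)$ and $u_1=e^{\lambda(t+x\cdot\omega)}(\chi+z)$, the two exponential factors cancel exactly, leaving
\[
\int_Q q(t,x)\,|\phi(x+t\omega)|^2\,dx\,dt + \text{(remainder)},
\]
where the remainder involves products with $w_2$ and $z$. Performing the change of variables $y=x+t\omega$ in the leading term turns $\int_Q q(t,x)|\phi(x+t\omega)|^2$ into an integral of $\mathcal Rq(\cdot,\omega)$ against $|\phi|^2$, so by choosing $\phi$ to approximate an indicator/delta-like concentration I recover the value of $\mathcal Rq$ (or rather control its $L^1$-norm after an appropriate choice of test amplitudes). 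The remainder terms are estimated using the bounds $\norm{w_2}_{H^1(Q)}\leq C\lambda^{-1}\norm{\chi}_{H^3(Q)}$ from \eqref{p2a} and $\norm{z}_{L^2(Q)}\leq C\lambda^{-1/2}\norm{\chi}_{H^2(Q)}$ from \eqref{CGO1b}, which make them decay in $\lambda$.

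The boundary contribution, after applying the Carleman estimate \eqref{c1a} to $u_1$ (which vanishes on the illuminated part, so the bad term $\int_{\Sigma_{-,\omega}}$ is absent or controlled), is bounded by $e^{d\lambda}\norm{B_{q_1}-B_{q_2}}$: the exponential weight $e^{\lambda(\cdots)}$ in $u_1$ together with the finite domain produces the factor $e^{d\lambda}$, and the operator norm of $B_{q_1}-B_{q_2}$ controls the measured data on $G$ and at $t=T$. Balancing the $\lambda$-negative powers from the GO remainders against the $e^{d\lambda}$ amplification of the data term yields the stated bound \eqref{t3b} with exponent $\alpha/(4+2\alpha)$, where the precise exponent $\alpha=1-(n+1)/p$ comes from quantifying how well the $W^{1,p}$ regularity of $q$ lets the concentrating amplitudes $\phi$ resolve $\mathcal Rq$ in $L^1$.

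The main obstacle I anticipate is controlling the boundary and initial/final terms rigorously. The two GO solutions do not both vanish on all of $\partial Q$: $u_1$ is engineered to vanish only on $\Sigma_{+,\epsilon/2,-\omega}$ and at $t=0$, so the integration by parts leaves genuine boundary integrals on $G$, on $\Sigma_{+,\omega}$, and at $t=T$. Matching these precisely to the components of $B_{q_1}-B_{q_2}$—and absorbing the terms on $\Sigma_{+,\omega}$ and at $t=T$ using the Carleman estimate \eqref{c1a} applied to $u_1$, with the weight exactly dual to the growth of $u_1$—is the delicate part. A second subtlety is the trade-off in the choice of $\phi$: concentrating $|\phi|^2$ to pick out $\mathcal Rq$ inflates $\norm{\chi}_{H^3(Q)}$, which enters the GO remainder estimates and the $\lambda_0$ threshold, so the optimization producing the exponent $\alpha/(4+2\alpha)$ must be carried out carefully to keep all constants uniform in $\omega$.
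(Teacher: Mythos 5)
Your high-level skeleton does match the paper's strategy: pair the two GO families to produce the interior integral $\int_Q q\,u_1u_2\,dx\,dt$, concentrate the amplitude along light rays to extract $\mathcal Rq(\cdot,\omega)$, control the unmeasured boundary terms with the Carleman estimate, and balance the concentration parameter against $\lambda$ to get the exponent $\alpha/(4+2\alpha)$. However, you have assigned the two GO solutions to the wrong roles, and this breaks the two load-bearing steps. The input fed into the boundary operators must belong to $\mathcal H_F(\partial Q)$, i.e.\ have vanishing trace at $t=0$ and lateral trace supported in $F$; only the Theorem~\ref{tt1} solution (which vanishes on $\Sigma_{+,\epsilon/2,-\omega}\supset\Sigma\setminus F$ and satisfies $\tau_{0,2}=0$) qualifies. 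In your setup the auxiliary problem is generated by the data of $u_2$ from Theorem~\ref{p2}, whose lateral trace has no support restriction and whose trace at $t=0$ equals $e^{-\lambda x\cdot\omega}\chi(0,x)\neq0$, so $(\tau_{0,1}u_2,\tau_{0,3}u_2)\notin\mathcal H_F(\partial Q)$ and your claim that the surviving boundary terms ``are controlled by $B_{q_1}-B_{q_2}$'' is unjustified. In the paper it is the Theorem~\ref{tt1} solution (solving the $q_2$-equation) whose traces are the admissible input, $w_1$ solves the $q_1$-IBVP with the same data, and the measured quantities are exactly $(\partial_\nu u_{|G},u_{|t=T})=(B_{q_1}-B_{q_2})(\tau_{0,1}u_2,\tau_{0,3}u_2)$ for the difference $u=w_1-u_2$. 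Your configuration could be repaired by a consistent swap (tt1-type solution for $q_1$ as input, the $H^2$ solution of Theorem~\ref{p2} for $q_2$ as test function), but as written it is backwards.

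Second, you apply the Carleman estimate \eqref{c1a} to $u_1$, the tt1-type GO. Theorem~\ref{c1} requires \eqref{ttc1}: vanishing trace on \emph{all} of $\Sigma$ and both Cauchy traces zero at $t=0$ (plus $\mathcal C^2$ smoothness), whereas $u_1$ vanishes only on part of $\Sigma$, satisfies only $u_1{}_{|t=0}=0$ (not $\partial_tu_1{}_{|t=0}=0$), and lies merely in $H_\Box(Q)$. More importantly, the Carleman estimate is not meant for either GO solution: its role is to control the unmeasured weighted data of the \emph{difference} $u$, namely $e^{-\lambda(t+x\cdot\omega)}\partial_\nu u$ on $\Sigma\setminus G\subset\Sigma_{+,\epsilon,\omega}$ and $e^{-\lambda(T+x\cdot\omega)}\partial_tu(T,\cdot)$, converting them into the $O(1)$ weighted source $q(\chi_\delta+z)$ (hence a $C/\lambda$ contribution) plus measured terms on $\Sigma_{-,\omega}\subset G$ and at $t=T$; since $u$ has zero lateral and Cauchy data and the regularity supplied by Theorem A.2 of \cite{BCY}, it is the only legitimate target for \eqref{c1a}. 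A smaller gap: your ``delta-like concentration'' must be quantitative — the paper takes $\chi_\delta(t,x,y)=\delta^{-n/2}\phi\bigl(\delta^{-1}(y-x-t\omega)\bigr)$, obtains the mollified transform $V_{\delta,q}$, and proves $\norm{V_{\delta,q}-\mathcal Rq(\cdot,\omega)}_{L^1(\R^n)}\leq C\delta^\alpha$, which uses the H\"older bound $q\in\mathcal C^\alpha(\overline Q)$ from the Sobolev embedding \emph{together with} hypothesis \eqref{thmma}, needed so that the extension of $q$ by zero across $\Sigma$ remains $\mathcal C^\alpha$; the stated exponent then follows from the explicit choice $\delta=\lambda^{-1/(4+2\alpha)}$ balancing $\delta^{\alpha}$ against $\delta^{-2}\lambda^{-1/2}$, a point your sketch leaves implicit.
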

\begin{proof}  Let $\phi\in\mathcal C^\infty_0(\R^n)$ be such that $0\leq\phi\leq1$, supp$\phi\subset\{x\in\R^n:\ |x|\leq 1\}$, $\norm{\phi}_{L^2(\R^n)}=1$.   For $0<\delta<1$, we set
\[\chi_\delta(t,x,y)=\delta^{-n/2}\phi\left(\delta^{-1}(y-x-t\omega)\right),\quad t\in\R,\ x\in \R^n,\ y\in\R^n.\]
 Note that
\begin{equation}\label{ess1}\norm{\chi_\delta(.,.,y)}_{H^k(\R^{1+n})}\leq C\delta^{-k},\quad y\in\R^n\end{equation}
with $C$ independent of $\delta$ and $y$.
 We fix $\lambda_2=\max(C\lambda_0+1,\lambda_1)^{{ \alpha+3\over\alpha}}$ with $\lambda_0$ the constant introduced in Theorem \ref{p2}, $\lambda_1$ the constant introduced in Theorem \ref{c1} and $C$ the constant of the previous estimate.
Let $\omega\in\{y\in\mathbb S^{n-1}:|y-\omega_0|\leq\epsilon\} $ and let $\lambda>\lambda_2$ with $\lambda\geq \delta^{-\alpha-3}$,  $\omega\in\{y\in\mathbb S^{n-1}:|y-\omega_0|\leq\epsilon\} $. Then, we have  $$\lambda>\lambda_2^{{\alpha\over \alpha+3}}\lambda^{{3\over \alpha+3}}\geq \lambda_0C\delta^{-3}\geq \lambda_0\norm{\chi_\delta}_{H^3(Q)}$$ and, in view of Theorem \ref{p2}, we can introduce
\[u_1(t,x)=e^{-\lambda(t+x\cdot\omega)}\left(\chi_\delta(t,x,y)+w(t,x) \right) ,\ (t,x) \in Q,\]
where $u_1\in H^2(Q)$ satisfies $\partial_t^2u_1-\Delta u_1+q_1u_1=0$ and  $w$ satisfies \eqref{p2a}. Moreover, in view of Theorem \ref{tt1}, we consider $u_2\in H_{\Box}(Q)$ a solution of \eqref{(5.1)} with $q=q_2$ of the form 
\[u_2(t,x)=e^{\lambda(t+x\cdot\omega)}\left(\chi_\delta(t,x,y)+z(t,x) \right),\quad (t,x)\in Q\]
with $z$ satisfying \eqref{CGO1b}, such that supp$\tau_{0,1}u_{2}\subset F$  and $\tau_{0,2}u_2=0$.
 Let $w_1$ be the solution of
 \bel{eq3}
\left\{
\begin{array}{ll}
 \partial_t^2w_1-\Delta w_1 +q_1w_1=0 &\mbox{in}\ Q,
\\

{w_1}_{|t=0}=0,\ {\partial_tw_1}_{|t=0}=\tau_{0,3}u_2  &\mbox{on}\ \Omega,\\
{w_1}_{|\Sigma}=\tau_{0,1}u_2.&\ 

\end{array}
\right.
\ee
Then, $u=w_1-u_2$ solves
  \bel{eq4}
\left\{\begin{array}{ll}
 \partial_t^2-\Delta u +q_1u=(q_2-q_1)u_2 &\mbox{in}\ Q,
\\
u(0,x)=\partial_tu(0,x)=0 & \mathrm{on}\  \Omega,\\

u=0 &\mbox{on}\ \Sigma
\end{array}\right.
\ee
 and since $(q_2-q_1)u_2\in L^2(Q)$, in view of Theorem A.2 in \cite{BCY}, we deduce that $u\in \mathcal C^1([0,T];L^2(\Omega))\cap \mathcal C([0,T];H^1_0(\Omega))$ with $\partial_\nu u\in L^2(\Sigma)$. Moreover we have
\[\norm{u}_{\mathcal C^1([0,T];L^2(\Omega))}+\norm{u}_{\mathcal C([0,T];H^1_0(\Omega))} +\norm{\partial_\nu u}_{L^2(\Sigma)}\leq 2CM\norm{u_2}_{L^2(Q)}.\]
Applying the Green formula with respect to $x\in\Omega$ and integration by parts with respect to $t\in(0,T)$, we find
\begin{equation}\label{t3a}\begin{aligned}\int_{Q}qu_2u_1d xdt&=\int_{Q}(\partial_t^2-\Delta+q_1)uu_1d xdt\\
\ &=-\int_{G}\partial_\nu uu_1d\sigma(x)dt-\int_{\Sigma\setminus G}\partial_\nu uu_1d\sigma(x)dt\\
\ &\ \ \ +\int_\Omega \partial_tu(T,x)u_1(T,x)dx -\int_\Omega u(T,x)\partial_tu_1(T,x)dx .\end{aligned}\end{equation}
Combining \eqref{ess1} and \eqref{p2a}, we find
\begin{equation}\label{ess2}\norm{u_1}_{H^2(Q)}\leq C\delta^{-3} e^{c\lambda}\end{equation}
with $c=T+\textrm{Diam}(\Omega)+1$, $C$ depending on $M$, $T$, $\Omega$, $n$, $p$. 
Moreover, in view of estimate \eqref{p2a}, we have
 \[\norm{w}_{L^2(\Sigma)}\leq C\norm{w}_{L^2(0,T; H^{\frac{1}{2}}(\partial\Omega))}\leq C\norm{w}_{L^2(0,T;H^{1}(\Omega))}\leq C\norm{w}_{H^1(Q)}\leq C,\]
 where $C$ depends on $\Omega$, $T$, $n$, $p$ and $M$.
Applying this estimate, \eqref{ess1}, \eqref{ess2} and the Cauchy Schwarz inequality, we obtain
\[\begin{aligned}\abs{\int_{\Sigma\setminus G}\partial_\nu uu_1d\sigma(x)dt}&\leq\int_{{\Sigma}_{+,\epsilon,\omega}}\abs{\partial_\nu ue^{-\lambda(t+x\cdot\omega)}(\chi_\delta+w)}dt d\sigma(x) \\
 \ &\leq C\delta^{-1}\left(\int_{{\Sigma}_{+,\epsilon,\omega}}\abs{e^{-\lambda(t+x\cdot\omega)}\partial_\nu u}^2d\sigma(x)dt\right)^{\frac{1}{2}},\end{aligned}\]
\[\abs{\int_{ G}\partial_\nu uu_1d\sigma(x)dt}\leq C\delta^{-1}e^{c\lambda}\norm{\partial_\nu u}_{L^2(G)}\]
 for some $C$ depending only on  $\Omega$, $T$, $n$, $p$ and $M$. Here we use the fact that $\Sigma\setminus G\subset{\Sigma}_{+,\epsilon,\omega}$
In the same way, \eqref{p2a}  imply
\[\norm{w_{\vert t=T}}_{L^2(\Omega)}\leq C\norm{w}_{H^{1}(0,T;L^2(\Omega))}\leq C\norm{w}_{H^1(Q)}\leq C\]
and 
\[\norm{\partial_tw_{\vert t=T}}_{L^2(\Omega)}\leq C\norm{w}_{H^{2}(0,T;L^2(\Omega))}\leq C\norm{w}_{H^2(Q)}\leq  C\delta^{-3}\]
with $C$ a generic constant depending on $\Omega$, $T$, $M$, $n$, $p$.
Thus, we obtain
\[\abs{\int_\Omega \partial_tu(T,x)u_1(T,x)dx}\leq C\left(\int_\Omega \abs{e^{-\lambda(T+x\cdot\omega)}\partial_tu(T,x)}^2dx\right)^{\frac{1}{2}},\]
\[\abs{\int_\Omega u(T,x)\partial_t u_1(T,x)dx}\leq C\delta^{-3}e^{c\lambda}\left(\int_\Omega |u(T,x)|^2dx\right)^{\frac{1}{2}}.\]
In view of these estimates and \eqref{t3a}, we have
\begin{equation}\label{t3cc}\begin{array}{c}\abs{\int_{Q}qu_2u_1d xdt}^2\\
\leq C\delta^{-1}\left(\int_\Omega \abs{e^{-\lambda(T+x\cdot\omega)}\partial_tu(T,x)}^2dx+\int_{{\Sigma}_{+,\epsilon,\omega}}\abs{e^{-\lambda(t+x\cdot\omega)}\partial_\nu u}^2d\sigma(x)dt\right)\\
\ \ \ C\delta^{-6}e^{2c\lambda}\left(\norm{\partial_\nu u}_{L^2(G)}^2+\norm{u_{t=T}}_{H^1(\Omega)}^2\right) \end{array}\end{equation}
where $C$ depends on $\Omega$, $T$, $M$, $n$, $p$. On the other hand,  the Carleman estimate \eqref{c1a} and the fact that ${\partial\Omega}_{+,\epsilon,\omega}\subset {\partial\Omega}_{+,\omega}$ imply
\[\begin{array}{l}\int_{{\Sigma}_{+,\epsilon,\omega}}\abs{e^{-\lambda(t+x\cdot\omega)}\partial_\nu u}^2d\sigma(x)dt+\int_\Omega \abs{e^{-\lambda(T+x\cdot\omega)}\partial_tu(T,x)}^2dx\\
\ \\

\leq \epsilon^{-1}\left(\int_{{\Sigma}_{+,\omega}}\abs{e^{-\lambda(t+x\cdot\omega)}\partial_\nu u}^2\omega\cdot \nu(x)d\sigma(x)dt+\int_\Omega \abs{e^{-\lambda(T+x\cdot\omega)}\partial_tu(T,x)}^2dx\right)\\
\ \\
\leq {\epsilon^{-1}C\over \lambda}\left(\int_Q\abs{ e^{-\lambda(t+x\cdot\omega)}(\partial_t^2-\Delta  +q_1)u}^2dxdt+\int_{{\Sigma}_{-,\omega}}\abs{e^{-\lambda(t+x\cdot\omega)}\partial_\nu u}^2\abs{\omega\cdot \nu(x)}d\sigma(x)dt\right)\\
\ \ \ +\epsilon^{-1}C \left(\lambda^3\int_\Omega e^{-2\lambda(T+x\cdot\omega)}\abs{u_{\vert t=T}}^2dx+\lambda\int_\Omega e^{-2\lambda(T+x\cdot\omega)}\abs{\nabla_xu_{\vert t=T}}^2dx\right)\\
\ \\
\leq {\epsilon^{-1}C\over\lambda}\left(\int_Q\abs{ \abs{ q}^2(1+\abs{z})^2}^2dxdt\right)+C\delta^{-6}e^{2c\lambda} (\norm{\partial_\nu u}_{L^2(G)}^2+\norm{u_{t=T}}_{H^1(\Omega)}^2)\end{array}\]
Combining this with \eqref{t3cc}, we obtain
\begin{equation}\label{t2f}\abs{\int_{Q}qu_1u_2d xdt}^2
\leq {C\over \lambda} +C\delta^{-6}e^{2c\lambda} (\norm{\partial_\nu u}_{L^2(G)}^2+\norm{u_{t=T}}_{H^1(\Omega)}^2)\end{equation}
with $C$ depending only on  $\Omega$, $T$, $G'$, $M$, $n$, $p$. On the other hand,  we have
\[\int_{Q}qu_1u_2d xdt=\int_{\R^{1+n}}q(t,x)\chi_\delta^2(t,x,y)dxdt+ \int_{Q}Z(t,x)  dxdt\]
with $ Z=q(z\chi_\delta+w\chi_\delta+zw)$. Then, in view of \eqref{p2a}  and \eqref{CGO1b}, an application of the Cauchy-Schwarz inequality yields
\[\abs{\int_{Q}Z(t,x)  dxdt}\leq C(\delta^{-2}\lambda^{-\frac{1}{2}}+\delta^{-3}\lambda^{-1})\]
with $C$ depending  on $\Omega$, $T$, $G'$, $M$. Combining this estimate with \eqref{t2f}, we obtain
\[\abs{V_{\delta,q}(y)}^2\leq
 C \left(\delta^{-4}\lambda^{-1}+\delta^{-6}\lambda^{-2} +\delta^{-6}e^{2c\lambda} (\norm{\partial_\nu u}_{L^2(G)}^2+\norm{u_{t=T}}_{H^1(\Omega)}^2)\right)\]
with 
\[V_{\delta,q}(y)=\int_{\R^{1+n}}q(t,x)\delta^{-n}\phi^2(\delta^{-1}(y-x-t\omega))dxdt=\int_{\R^n}\left(\int_\R q(t,x+t\omega)dt\right)\delta^{-n}\phi^2(\delta^{-1}(y-x))dx,\quad y\in\R^n.\]
On the other hand, one can check that supp$V_{\delta,q}\subset\{y:\ |y|\leq T+\textrm{Diam}(\Omega)+1\}$ and from the previous estimate we get
\begin{equation}\label{t33d}\norm{V_{\delta,q}}_{L^1(\R^n)}\leq
 C \left(\delta^{-2}\lambda^{-1/2}+\delta^{-3}\lambda^{-1} +\delta^{-3}e^{c\lambda} (\norm{\partial_\nu u}_{L^2(G)}+\norm{u_{t=T}}_{H^1(\Omega)})\right)\end{equation}
In order to complete the proof we only need to check that this estimates implies \eqref{t3b}. For this purpose using the fact that $q\in W^{1,p}(Q)$ with $p>n+1$, by the Sobolev embedding theorem (e.g. Theorem 1.4.4.1 of \cite{Gr}) we have $q\in \mathcal C^{\alpha}(\overline{Q})$ with
$\norm{q}_{\mathcal C^{\alpha}(\overline{Q})}\leq CM$ with $C$ depending on $\Omega$, $n$, $p$ and $T$. Moreover, applying \eqref{thmma}, we deduce that for all $t\in\R$, $q(t,\cdot)\in\mathcal C^{\alpha}(\R^n)$. Thus, using the fact that
\[\mathcal Rq(x,\omega)=\int_0^Tq(t,x+t\omega)dt,\]
we deduce that  $\mathcal Rq(\cdot,\omega)\in \mathcal C^{\alpha}(\R^n)$ and supp$\mathcal Rq(\cdot,\omega)\subset\{x\in\R^n:\ |x|\leq \textrm{Diam}(\Omega)+T\}$. Combining this with the fact that
\[V_{\delta,q}(y)=\int_{\R^n}\mathcal Rq(y-\delta u,\omega)\phi^2(u)du,\]
we obtain
\[\norm{V_{\delta,q}-\mathcal Rq(\cdot,\omega)}_{L^1(\R^n)}\leq C\delta^{\alpha}\]
with $C$ depending on $\Omega$, $T$, $M$, $p$ and $n$.
 Combining this with  \eqref{t33d} we deduce \eqref{t3b} by using the fact that 
\[\begin{aligned}\norm{\partial_\nu u}_{L^2(G)}^2+\norm{u_{t=T}}_{H^1(\Omega)}^2&\leq \norm{B_{q_1}-B_{q_2}}^2\norm{(\tau_{0,1}u_2,\tau_{0,3}u_2)}^2_{\mathcal H_F(\partial Q)}\\
\ &\leq C\norm{B_{q_1}-B_{q_2}}^2\norm{u_2}^2_{H_\Box(Q)}\\
\ &\leq C\norm{B_{q_1}-B_{q_2}}^2\norm{u_2}^2_{L^2(Q)}\\
\ &\leq C\delta^{-4}e^{2c\lambda}\end{aligned}\]
and by choosing $\delta=\lambda^{-{1\over4+2\alpha}}$ and $d=2c+1$. Here we have used \eqref{CGO1b} and the fact that
\[\norm{u_2}^2_{H_\Box(Q)}=(1+\norm{q_2}_{L^\infty(Q)}^2)\norm{u_2}^2_{L^2(Q)}.\]
\end{proof}

From now on, for all $r>0$, we denote by $B_r$ the set $B_r= \{z\in\R^{1+n}:\ \abs{z}<r\}$.
Let us recall the following result, which follows from Theorem 3 in \cite{AE} (see also \cite{V}), on the continuous dependence in the analytic continuation problem.
\begin{prop}\label{p8}  Let $\rho>0$ and assume that $f:\ B_{2\rho}\subset \R^{1+n}\to \C$ is a real analytic function satisfying  
\[\norm{\partial^\beta f}_{L^\infty(B_{2\rho})}\leq \frac{N\beta!}{(\rho \lambda)^{\abs{\beta}}},\quad \beta\in\mathbb N^{1+n}\]
for some $N>0$ and $0<\lambda\leq1$. Further let $E\subset B_{\frac{\rho}{2}}$ be a  measurable set with strictly positive Lebesgue measure. Then,
\[\norm{ f}_{L^\infty(B_\rho)}\leq C(N)^{(1-b)}\left(\norm{ f}_{L^\infty(E)}\right)^{b},\]
where $b\in(0,1)$, $C>0$ depend  on   $\lambda$, $\abs{E}$ and $\rho$.\end{prop}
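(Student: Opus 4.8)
The plan is to prove Proposition \ref{p8}, a quantitative uniqueness statement for analytic functions: knowing $f$ on a set $E$ of positive measure inside a small ball controls $f$ on the larger ball $B_\rho$, with the characteristic H\"older-type loss $\norm{f}_{L^\infty(E)}^{b}$. My strategy rests on two classical ingredients: first, the analyticity hypothesis gives a uniform lower bound on the radius of convergence of $f$ at every point of $B_{2\rho}$, so $f$ extends holomorphically to a fixed complex neighborhood; second, one invokes a three-sphere / three-ball inequality (the content of Theorem 3 in \cite{AE}) which propagates smallness from $E$ to $B_\rho$. Since the proposition explicitly says it follows from \cite{AE}, my job is really to check that the hypotheses of that theorem are met and to track how the constants $N$, $\lambda$, $\abs{E}$, $\rho$ enter.

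\textbf{First}, I would record what the derivative bound buys us. The estimate $\norm{\partial^\beta f}_{L^\infty(B_{2\rho})}\leq N\beta!\,(\rho\lambda)^{-\abs{\beta}}$ is precisely the Cauchy-type estimate characterizing real-analytic functions with radius of analyticity at least $\rho\lambda$: Taylor-expanding at any $x_0\in B_{2\rho}$ and summing the multi-index series, $f$ extends to a bounded holomorphic function on the complex polydisc (or ball) of polyradius a fixed fraction of $\rho\lambda$ about $x_0$, with sup-norm comparable to $N$. The normalization $0<\lambda\leq 1$ ensures this radius is $\leq\rho$, so the holomorphic extension lives on a complex neighborhood whose size is governed only by $\rho\lambda$. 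This is the step where I would be most careful, because it is what converts the real-variable hypothesis into the complex-analytic setting in which the interpolation/propagation inequality of \cite{AE} is formulated, and it is here that the dependence of the final constants on $\lambda$ and $\rho$ originates.

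\textbf{Next}, with $f$ realized as a holomorphic function of bounded modulus $\lesssim N$ on a fixed complex neighborhood of $\overline{B_\rho}$, I would apply the continuous-dependence estimate of \cite{AE}. That result states, for holomorphic functions on such a region, an inequality of the form $\norm{f}_{L^\infty(B_\rho)}\leq C\,\norm{f}^{1-b}_{\text{(global)}}\,\norm{f}^{b}_{L^\infty(E)}$, where the exponent $b\in(0,1)$ and the constant $C$ depend on the geometry, namely on $\rho$, on $\abs{E}>0$, and on the size $\rho\lambda$ of the analyticity neighborhood. Bounding the global factor by the a priori size $N$ coming from Step 1 yields exactly the asserted
\[
\norm{f}_{L^\infty(B_\rho)}\leq C(N)^{1-b}\left(\norm{f}_{L^\infty(E)}\right)^{b}.
\]
The positivity of $\abs{E}$ is essential: it is what guarantees $b>0$ and keeps $C$ finite, since the interpolation inequality degenerates as $\abs{E}\to 0$.

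\textbf{The main obstacle} is not any single estimate but faithfully reducing to the stated hypotheses of \cite{AE}: verifying that the real-analytic derivative bound produces a genuine holomorphic extension on a neighborhood of the \emph{prescribed} size and that $E\subset B_{\rho/2}$ sits well inside $B_\rho$ so that the three-ball geometry of \cite{AE} applies. Once the complexification is in place, the propagation of smallness is exactly the cited theorem, and no further hard analysis is required; the role of the factor-of-two gap between $B_{\rho/2}$, $B_\rho$ and $B_{2\rho}$ is to leave room for both the holomorphic extension and the interior estimate, so I would only need to remark that this nesting is what makes the constants depend on $\lambda$, $\abs{E}$ and $\rho$ alone.
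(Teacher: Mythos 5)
Your proposal is correct and coincides with the paper's treatment: the paper offers no proof of Proposition \ref{p8} at all, presenting it as a direct consequence of Theorem 3 in \cite{AE} (see also \cite{V}), which is exactly the reduction you carry out. The one remark worth making is that Theorem 3 of \cite{AE} is already formulated in the real-analytic category --- for functions on a ball obeying precisely such Cauchy-type derivative bounds, with $E$ a measurable subset of positive measure --- so the holomorphic extension you single out as the main obstacle is not needed to invoke it (it is internal to the proof in \cite{AE}), and the application is immediate since $\beta!\leq\abs{\beta}!$ makes your hypothesis at least as strong as theirs and $\norm{f}_{L^\infty(E)}$ dominates the average of $\abs{f}$ over $E$.
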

Armed with Lemma \ref{l100}, we will use Proposition \ref{p8} to complete the proof of  Theorem \ref{thm1}.
\ \\
\textbf{Proof of Theorem \ref{thm1}.}  We set $U=\{y\in\mathbb S^{n-1}:|y-\omega_0|\leq\epsilon\}$. For all $\xi\in\R^n$ we introduce 
\[a(\xi)=\inf_{\omega\in U}\xi\cdot\omega,\quad b(\xi)=\sup_{\omega\in U}\xi\cdot\omega.\]
Consider the set $E_1=\{(\tau,\xi):\ \xi\in\R^n,\ a(\xi)\leq\tau\leq b(\xi)\}$ and note that for all $(\tau,\xi)\in E_1$ there exists $\omega\in U$ such that $\tau=\xi\cdot\omega$. It is clear  that 
\[(2\pi)^{-n/2}\int_{\R^n}\mathcal Rq(x,\omega)e^{-ix\cdot\xi}dx=(2\pi)^{-n/2}\int_{\R^{n+1}}q(t,x)e^{-i(t\omega\cdot\xi+x\cdot\xi)}dtdx=(2\pi)\mathcal F(q)(\omega\cdot\xi,\xi),\ \xi\in\R^n,\]
where $\mathcal F(q)$ is the Fourier transform of $q$ given by 
\[\mathcal F(q)(\tau,\xi)=(2\pi)^{-(n+1)/2}\int_{\R^{n+1}}q(t,x)e^{-i(t\tau+x\cdot\xi)}dtdx.\]
Thus,  we get 
\[\abs{\mathcal F(q)(\tau,\xi)}\leq (2\pi)^{-(n+1)/2} \sup_{\omega\in U}\norm{\mathcal Rq(.,\omega)}_{L^1(\R^n)},\quad (\tau,\xi)\in E_1.\]
 Combining this with \eqref{t3b} we obtain 
\begin{equation}\label{ef}\abs{\mathcal F(q)(\tau,\xi)}\leq C\left(\lambda^{-{\alpha\over4+2\alpha}}+e^{d\lambda}\norm{B_{q_1}-B_{q_2}}\right),\quad (\tau,\xi)\in E_1.\end{equation}
 We set for fixed $R>0$, which will be made precise later, and $(\tau,\xi)\in\R^{1+n}$, 
\[H(\tau,\xi)=\mathcal F(q)(R(\tau,\xi))=(2\pi)^{-\frac{(1+n)}{2}}\int_{\R^{1+n}}q(t,x)e^{-iR(\tau,\xi)\cdot (t,x)}dxdt.\]
 Since supp$q\subset\overline{Q}$ and $0\in\Omega$, $H$ is real analytic and
\[\abs{\partial^\beta H(\tau,\xi)}\leq C\frac{\norm{q}_{L^1(Q)}R^{\abs{\beta}}}{([\max(T,\textrm{Diam}(\Omega))]^{-1})^{\abs{\beta}}}\leq C\norm{q}_{L^1(Q)}\frac{R^{\abs{\beta}}}{\beta!([\max(T,\textrm{Diam}(\Omega))]^{-1})^{\abs{\beta}}}\beta!,\quad\beta\in\mathbb N^{1+n}\]
 with $C$ depending on  $T$, $n$, $p$ and $\Omega$.
Moreover, we have
\[\norm{q}_{L^1(Q)}\leq 2M(T\abs{\Omega})^{1-{1\over p}}\]
and one can check that
\[\frac{R^{\abs{\beta}}}{\beta!}\leq e^{(1+n)R}.\]
Applying these estimates, we obtain
\begin{equation}\label{lele}\abs{\partial^\beta H(\tau,\xi)}\leq C\frac{e^{(1+n)R}\beta!}{([\max(T,\textrm{Diam}(\Omega))]^{-1})^{\abs{\beta}}},\quad\beta\in\mathbb N^{1+n}\end{equation}
with $C$ depending on $M$, $\Omega$ and $T$. Set  $\rho=[\max(T,\textrm{Diam}(\Omega))]^{-1}+1$,   $E=E_1\cap\{\zeta\in\R^{1+n}:\ \abs{\zeta}<\min(\frac{\rho}{2},1)\}$ with $N=Ce^{(1+n)R}$ and $\lambda=\frac{[\max(T,\textrm{Diam}(\Omega))]^{-1}}{\rho}$.   In view of \eqref{lele}, we have
\[\norm{\partial^\beta H}_{L^\infty(B_{2\rho})}\leq C\frac{e^{(1+n)R}\beta!}{([\max(T,\textrm{Diam}(\Omega))]^{-1})^{\abs{\beta}}}=\frac{N\beta!}{(\rho \lambda)^{\abs{\beta}}},\quad\beta\in\mathbb N^{1+n}.\]
Since for all $(\tau,\xi)\in E_1$ we have $|(\tau,\xi)|^2\leq 2|\xi|^2$, one can check that $$\mathcal A=\{(\tau,\xi):\ (\tau,\xi)\in E_1,\ \xi/|\xi|\in U,\ \abs{\xi}<r\}\subset E$$ with $r={\min(\rho/2,1)\over \sqrt{2}}$. On the other hand, for all $(\tau,\xi)\in\mathcal A$, we have $b(\xi)=\abs{\xi}$ and for any $\omega\in U\setminus\{\pm\xi/|\xi|\}$ we have $\omega\cdot\xi<|\xi|=b(\xi)$. Therefore, for  all $(\tau,\xi)\in\mathcal A$ we have $a(\xi)<b(\xi)$ and, by fixing $W=\{\xi\in\R^n:\ \xi/|\xi|\in U,\ |\xi|<r\}$, we get
\[|\mathcal A|=\int_{\mathcal A}d\tau d\xi=\int_{W}\int_{a(\xi)}^{b(\xi)}d\tau d\xi=\int_{W}(b(\xi)-a(\xi))d\tau d\xi>0.\]
Here we have used the fact that $a,b\in\mathcal C(\R^n)$ and the fact that $|W|>0$.
Thus, we have $\abs{E}>0$. Moreover, one can easily check that for all $(\tau,\xi)\in E_1$, $r>0$, $(r\tau,r\xi)\in E_1$.
Then, since $E\subset B_{\frac{\rho}{2}}$, $0<\lambda<1$ and $\rho>1$, applying Proposition \ref{p8} to $H$ we obtain
\[\abs{\mathcal F(q)(R(\tau,\xi))}=\abs{ H(\tau,\xi)}\leq \norm{H}_{L^\infty(B_\rho)}\leq Ce^{(1+n)R(1-b)}\left(\norm{H}_{L^\infty(E)}\right)^b,\quad \abs{(\tau,\xi)}<1,\]
where $C>0$ and $0<b<1$ depend only on $\Omega$, $T$, $M$, $F'$ and $G'$. 
But, estimate \eqref{t3b} implies that
\[\abs{H(\tau,\xi)}^2=\abs{\mathcal F(q)(R(\tau,\xi))}^2\leq C\left(\lambda^{-{2\alpha\over4+2\alpha}}+e^{2d\lambda}\norm{B_{q_1}-B_{q_2}}^2\right),\ (\tau,\xi)\in E\]
and we deduce
\begin{equation}\label{t3c}\abs{\mathcal F(q)(\tau,\xi)}^2\leq Ce^{2(1+n)(1-b)R}\left(\lambda^{-{2\alpha\over4+2\alpha}}+e^{2d\lambda}\norm{B_{q_1}-B_{q_2}}^2\right)^b,\ \abs{(\tau,\xi)}<R.\end{equation}
Note that
\begin{equation}\label{t3d}
\| q \|_{H^{-1}(\R^{1+n})}^{\frac{2}{b}}\leq C \left( \int_{\R^{1+n}} (1+\abs{(\tau,\xi)}^2)^{-1}| \mathcal F(q)(\tau,\xi)|^2  dydl\right)^{\frac{1}{b}}.
\end{equation}
 We shall make precise below,  \[\int_{B_R} (1+\abs{(\tau,\xi)}^2)^{-1}| \mathcal F(q)(\tau,\xi)|^2  dydl\] and \[\int_{\R^{1+n} \setminus B_R} (1+\abs{(\tau,\xi)}^2)^{-1}| \mathcal F(q)(\tau,\xi)|^2  dldy\] separately.
We start by examining the last integral. 
The  Parseval-Plancherel theorem and the Sobolev embedding theorem, imply
\[\begin{aligned} \int_{\R^3 \setminus B_{R}} (1+\abs{(\tau,\xi)}^2)^{-1}| \mathcal F(q)(\tau,\xi)|^2  dydl 
 &\leq  \frac{1}{R^2} \int_{\R^{1+n} \setminus B_{R}}
| \mathcal F(q)(\tau,\xi) |^2  dydl \\
\ & \leq  \frac{1}{R^2} \int_{\R^{1+n}} | \mathcal F(q)(\tau,\xi) |^2  dydl=\frac{1}{R^2} \int_{\R^{1+n}} | q(t,x) |^2 dtdx \\ 
\ & \leq  \frac{4(T\abs{\Omega})^{{p-2\over p}}M^2}{R^2}.\end{aligned}\] 
We end up getting that
\bel{t3f}
\int_{\R^{1+n} \setminus B_{R}} (1+\abs{(\tau,\xi)}^2)^{-1}| \mathcal F(q)(\tau,\xi)|^2  dydl\leq \frac{C}{R^2}.
\ee
Further, in light of \eqref{t3c}, we get
\bel{t3h}
\int_{ B_{R} }(1+\abs{(\tau,\xi)}^2)^{-1} | \mathcal F(q)(\tau,\xi)|^2  dydl
\leq CR^{1+n}e^{2(1+n)(1-b)R}\left(\lambda^{-{2\alpha\over4+2\alpha}}+e^{2d\lambda}\norm{B_{q_1}-B_{q_2}}^2\right)^b,
\ee
upon eventually substituting $C$ for some suitable algebraic expression of $C$.

Last, putting \eqref{t3f}--\eqref{t3h} together we find out that
\bel{t3j}\begin{aligned}
\| q \|_{H^{-1}(\R^{1+n})}^{\frac{2}{b}}&\leq C \left(\frac{1}{R^2}+R^{1+n}e^{2(1+n)(1-b)R}\left(\lambda^{-{2\alpha\over4+2\alpha}}+e^{2d\lambda}\norm{B_{q_1}-B_{q_2}}^2\right)^b\right)^{\frac{1}{b}}\\
\ &\leq C\left(R^{-\frac{2}{b}}+\lambda^{-{2\alpha\over4+2\alpha}}R^{\frac{n+1}{b}}e^{2(1+n)(\frac{1-b}{b})R}+R^{\frac{n+1}{b}}e^{2(1+n)(\frac{1-b}{b})R}e^{2d\lambda}\norm{B_{q_1}-B_{q_2}}^2\right),\end{aligned}
\ee
for $\lambda>\lambda_2$ where the constant $C>0$ depends only on  $\Omega$, $T$, $F'$, $G'$, $n$, $p$ and $M$. Here we have used the fact that $x\mapsto x^{\frac{1}{b}}$ is convex on $(0,+\infty)$ since $b\in(0,1)$. Now let $R_1>1$ be such that 
 \[R^{\frac{n+3}{b}}e^{2(1+n)R(\frac{1-b}{b})}>\lambda_2^{{2\alpha\over4+2\alpha}},\quad R>R_1.\]
Then, choosing $\lambda^{{2\alpha\over4+2\alpha}}=R^{\frac{n+3}{b}}e^{2(1+n)R(\frac{1-b}{b})}$ we have $\lambda>\lambda_2$ and $\lambda^{-{2\alpha\over4+2\alpha}}R^{\frac{n+1}{b}}e^{2(1+n)(\frac{1-b}{b})R}=R^{-\frac{2}{b}}$. With this value of $\lambda$ we obtain
\begin{equation}\label{t3k}\| q \|_{H^{-1}(\R^{1+n})}^{\frac{2}{b}}\leq C\left(R^{-\frac{2}{b}}+R^{\frac{n+1}{b}}e^{2(1+n)(\frac{1-b}{b})R}\textrm{exp}\left(2d{R}^{\frac{(n+3)(4+2\alpha)}{2\alpha b}}e^{(4+2\alpha)(1+n)R(\frac{1-b}{\alpha b})}\right)\norm{B_{q_1}-B_{q_2}}^2\right).\end{equation}
On the other hand, we have
\[\begin{array}{l}R^{\frac{n+1}{b}}e^{2(1+n)(\frac{1-b}{b})R}\textrm{exp}\left(2d{R}^{\frac{(n+3)(4+2\alpha)}{2\alpha b}}e^{(4+2\alpha)(1+n)R(\frac{1-b}{\alpha b})}\right)\\
\leq \textrm{exp}\left(R^{\frac{n+1}{b}}+2(1+n)\left(\frac{1-b}{b}\right)R+2d{R}^{\frac{(n+3)(4+2\alpha)}{2\alpha b}}e^{(4+2\alpha)(1+n)R(\frac{1-b}{\alpha b})}\right)\\
\leq \textrm{exp}\left(e^{\frac{n+1}{b}R}+e^{2(1+n)\left(\frac{1-b}{b}\right)R}+e^{[2d+\frac{(n+3)(4+2\alpha)}{2\alpha b}+(4+2\alpha)(1+n)(\frac{1-b}{\alpha b})]R}\right)\\
\leq \textrm{exp}\left(3e^{[\frac{n+1}{b}+2(1+n)(\frac{1-b}{b})+2d+\frac{(n+3)(4+2\alpha)}{2\alpha b}+(4+2\alpha)(1+n)(\frac{1-b}{\alpha b})]R}\right)\\
\leq \textrm{exp}\left(e^{[3+\frac{n+1}{b}+2(1+n)(\frac{1-b}{b})+2d+\frac{(n+3)(4+2\alpha)}{2\alpha b}+(4+2\alpha)(1+n)(\frac{1-b}{\alpha b})]R}\right).\end{array}\]
Setting $A=3+\frac{n+1}{b}+2(1+n)(\frac{1-b}{b})+2d+\frac{(n+3)(4+2\alpha)}{2\alpha b}+(4+2\alpha)(1+n)(\frac{1-b}{\alpha b})$, \eqref{t3k} leads to
\begin{equation}\label{t3l}\| q \|_{H^{-1}(\R^{1+n})}^{\frac{2}{b}}\leq C\left(R^{-\frac{2}{b}}+e^{{e}^{AR}}\norm{B_{q_1}-B_{q_2}}^2\right),\quad R>R_1.\end{equation}
Set $\gamma=\norm{B_{q_1}-B_{q_2}}$ and $\gamma^*=e^{-{e}^{AR_1}}$. For $\gamma\geq \gamma^*$ we have
\begin{equation}\label{t3m}\| q \|_{H^{-1}(Q)}\leq C\| q \|_{L^\infty(Q)}\leq \frac{2CM}{{\gamma^*}^2}\gamma^2.\end{equation}
For $0<\gamma<\gamma^*$, by taking
$R=R_2=\frac{1}{A} \ln(\abs{\ln \gamma})$  in \eqref{t3l}, which is permitted since $R_2> R_1$, we find out that
\[\| q \|_{H^{-1}(Q)}\leq \| q \|_{H^{-1}(\R^{1+n})}\leq C\ln(\abs{\ln \gamma})^{-1}\left(\ln(\abs{\ln \gamma})^{\frac{2}{b}}\gamma+A^{\frac{2}{b}}\right)^\frac{b}{2}.\]
 Now, since 
$\underset{0<\gamma \leq \gamma_*}{\sup} \left(\ln(\abs{\ln \gamma})^{\frac{2}{b}}\gamma+A^{\frac{2}{b}}\right)^\frac{b}{2}$ is just another constant depending only on  $\Omega$, $T$, $F'$, $G'$, $n$, $p$ and $M$, we obtain
\begin{equation}\label{t3n}\| q \|_{H^{-1}(Q)}\leq C\ln(\abs{\ln \gamma})^{-1},\quad 0<\gamma<\gamma^*.\end{equation}
 By interpolation we find
\[\| q \|_{L^2(Q)}\leq C\| q \|_{H^{1}(Q)}^{\frac{1}{2}}\| q \|_{H^{-1}(Q)}^{\frac{1}{2}}\leq C\left(4(n+2)M^2(\abs{\Omega}T)^{{p-2\over 2p}}\right)^{\frac{1}{2}}\| q \|_{H^{-1}(Q)}^{\frac{1}{2}}\]
with $C$ depending only on  $\Omega$ and $T$. Combining this  estimate with \eqref{t3n}, we deduce  \eqref{thm1a}.
\qed

\end{document}